\newcommand{\Z}{\mathbb{Z}}
\newcommand{\Q}{\mathbb{Q}}
\newcommand{\F}{\mathbb{F}}
\renewcommand{\SS}{\mathcal{S}}
\newcommand{\OO}{\mathcal{O}}
\newcommand{\U}{\mathcal{U}}
\newcommand{\M}{\mathcal{M}}
\newcommand{\xbar}{\overline{x}}
\newcommand{\ybar}{\overline{y}}
\newcommand{\zbar}{\overline{z}}
\newcommand{\Pbar}{\overline{P}}
\newcommand{\psibar}{\overline{\psi}}
\newcommand{\Gbar}{\overline{G}}
\newcommand{\ra}{\rightarrow}
\newcommand{\tst}{\textstyle}
\newcommand{\sigmat}{\widetilde{\sigma}}
\newcommand{\taut}{\widetilde{\tau}}
\newcommand{\At}{\widetilde{A}}
\newcommand{\Gt}{\widetilde{G}}
\newcommand{\id}{\text{id}}
\DeclareMathOperator{\Gal}{Gal}
\DeclareMathOperator{\rank}{rank}
\DeclareMathOperator{\Aut}{Aut}
\newcommand{\Lrightarrow}{\hbox to1cm{\rightarrowfill}}
\newcommand{\Ldownarrow}{\bigg\downarrow}
\newtheorem{theorem}{Theorem}
\newtheorem{lemma}[theorem]{Lemma}
\newtheorem{prop}[theorem]{Proposition}
\newtheorem{cor}[theorem]{Corollary}
\theoremstyle{definition}
\newtheorem{remark}[theorem]{Remark}
\numberwithin{equation}{section}
\numberwithin{theorem}{section}
\title{A converse to the Hasse-Arf theorem}
\author{G. Griffith Elder \\
Department of Mathematics \\
University of Nebraska Omaha \\
Omaha, NE 68182 \\
USA \\[.2cm]
{\tt elder@unomaha.edu}
\and
Kevin Keating \\
Department of Mathematics \\
University of Florida \\
Gainesville, FL 32611 \\
USA \\[.2cm]
{\tt keating@ufl.edu}}
\begin{document}

\maketitle

\begin{abstract}
Let $L/K$ be a finite Galois extension of local fields.
The Hasse-Arf theorem says that if $\Gal(L/K)$ is
abelian then the upper ramification breaks of $L/K$ must
be integers.  We prove the following converse to the
Hasse-Arf theorem: Let $G$ be a nonabelian group which
is isomorphic to the Galois group of some totally
ramified extension $E/F$ of local fields with residue
characteristic $p>2$.  Then there is a totally ramified
extension of local fields $L/K$ with residue
characteristic $p$ such that $\Gal(L/K)\cong G$ and
$L/K$ has at least one nonintegral upper ramification
break.
\end{abstract}

\section{Introduction}

Let $K$ be a local field and let $L/K$ be a finite
Galois extension.  Associated to $L/K$ are rational
numbers $u_1\le u_2\le\cdots\le u_n$ known as the upper
ramification breaks of $L/K$.  The upper ramification
breaks provide arithmetic information about the
extension $L/K$.  For instance, $L/K$ is a nontrivial
unramified extension if and only if $-1$ is the only
upper ramification break of $L/K$, and $L/K$ is at most
tamely ramified if and only if the set of upper
ramification breaks of $L/K$ is contained in $\{-1,0\}$.
It is a classical problem to determine the possibilities
for sequences of upper breaks.  The Hasse-Arf theorem
\cite{hasse,arf} says that if $G=\Gal(L/K)$ is abelian
then every upper break of $L/K$ is an integer.  The
Hasse-Arf theorem plays an important role in several
areas of number theory.  For instance, it is used in the
construction of the Artin representation \cite{artin}
and in Lubin's proof of the local Kronecker-Weber
theorem \cite{lcw}.

     The purpose of this paper is to prove a converse to
the Hasse-Arf theorem.  A full converse to the Hasse-Arf
theorem would state that if $G$ is a finite nonabelian
group then there exists a $G$-extension of local fields
which has a nonintegral upper ramification break.  In
fact the converse to Hasse-Arf does not hold in such
generality.  For instance, if $G$ is a nonabelian simple
group and $L/K$ is a $G$-extension then $K$ has infinite
residue field and $L/K$ is unramified, so the only upper
ramification break of $L/K$ is $-1$.  In
addition, if $G$ is a nonabelian group of order prime
to $p$ then every $G$-extension $L/K$ of local fields
with residue characteristic $p$ is at most tamely
ramified, and hence has upper ramification breaks
contained in $\{-1,0\}$.  To rule out examples like
these we restrict our attention to totally ramified
extensions.  Furthermore, to avoid vacuous cases we only
consider those nonabelian groups $G$ which can actually
occur as the Galois group of a totally ramified
extension of local fields with residue characteristic
$p$.  In Theorem~\ref{chat} we prove that if $G$ is a
such a group then for every local field $K$ of
characteristic $p$ there exists a totally ramified
$G$-extension $L/K$ which has a nonintegral upper
ramification break.  It then follows from a theorem of
Deligne that there exists a local field $F$ of
characteristic 0 and a totally ramified $G$-extension
$E/F$ which has a nonintegral upper ramification break.

     In Section~\ref{ram} we outline higher ramification
theory for Galois extensions of local fields.  Our
approach to proving the converse to Hasse-Arf is based
on constructing Galois extensions of local fields in
characteristic $p$ which have nonintegral upper
ramification breaks.  A benefit of working in
characteristic $p$ is that one can solve embedding
problems for $p$-extensions, as explained in
Section~\ref{emb}.  This means that it's enough to
construct extensions whose Galois groups are minimal in
a certain sense.  Therefore in Section~\ref{min} we
classify the minimal $p$-groups.  In Section~\ref{pext}
we use the results of Sections~\ref{emb} and \ref{min}
to prove the converse to the Hasse-Arf theorem for
totally ramified $p$-extensions.  In Section~\ref{Gext}
we extend the proof to cover arbitrary totally ramified
extensions.

     Throughout the paper we let $K$ be a field which is
complete with respect to a discrete valuation, with
perfect residue field of characteristic $p>2$.  Let
$K^{sep}$ be a separable closure of $K$, and for each
finite subextension $L/K$ of $K^{sep}/K$ let $v_L$ be
the valuation on $K^{sep}$ normalized so that
$v_L(L^{\times})=\Z$.  Let $\OO_L$ denote the ring of
integers of $L$, let $\M_L$ denote the maximal ideal of
$\OO_L$, and let $\pi_L$ be a uniformizer for $L$.

\section{Ramification in extensions of local fields}
\label{ram}

Let $L/K$ be a finite totally ramified Galois extension.
In this section we define the lower and upper
ramification breaks of $L/K$, and the ramification
subgroups of $\Gal(L/K)$.  For more information on these
topics see Chapter~IV of \cite{cl}.

     Let $L/K$ be a totally ramified Galois extension of
degree $mp^n$, with $p\nmid m$.  Set $G=\Gal(L/K)$.  For
$\sigma\in G$ with $\sigma\not=\id_L$ define the
ramification number of $\sigma$ to be
$i(\sigma)=v_L(\sigma(\pi_L)-\pi_L)-1$; also define
$i(\id_L)=\infty$.  (Beware that $i(\sigma)$ is related
to $i_G(\sigma)$ as defined in \cite[IV]{cl} by
$i_G(\sigma)=i(\sigma)+1$.)  One easily sees that if
$0<i(\sigma)<\infty$ then for all $\tau\in G$ we have
\begin{equation} \label{bigger}
i(\sigma^p)>i(\sigma),\;\;i([\sigma,\tau])>i(\tau).
\end{equation}
For real $x\ge0$ set
$G_x=\{\sigma\in G:i(\sigma)\ge x\}$.  Then $G_x$ is a
normal subgroup of $G$, known as the $x$th lower
ramification subgroup of $G$.  Say $b\ge0$ is a lower
ramification break of $L/K$ if
$G_{b+\epsilon}\lneqq G_b$ for all $\epsilon>0$.  Thus
$b$ is a lower break of $L/K$ if and only if
$b=i(\sigma)$ for some $\sigma\in G$ with
$\sigma\not=\id_G$.  It follows that every lower break
of $L/K$ is a nonnegative integer.  Furthermore, a
nonnegative integer $b$ is a lower ramification break if
and only if $G_{b+1}\lneqq G_b$.

     We have $i(\sigma)=0$ if and only if $|\sigma|$ is
not a power of $p$.  Hence $b_0=0$ is a lower
ramification break of $L/K$ if and only if $m>1$.  If
$b$ is a positive lower break of $L/K$ then
$|G_b:G_{b+1}|=p^d$ for some $d\ge1$.  In this case we
say that $b$ is a lower break with multiplicity $d$.
The positive lower ramification breaks of $L/K$, counted
with multiplicities, form a multiset with cardinality
$n$.  We denote the positive lower breaks of $L/K$ by
$b_1\le b_2\le\cdots\le b_n$.

     Let $M/K$ be a subextension of $L/K$ and set
$H=\Gal(L/M)$.  It follows from the definitions that
$H_x=H\cap G_x$ for all $x\ge0$.  Therefore the multiset
of lower ramification breaks of $L/M$ is contained in
the multiset of lower ramification breaks of $L/K$.  In
other words, the ramification groups $G_x$ and the lower
ramification breaks are compatible with passage to
subgroups of Galois groups.

     There is a different numbering system for the
ramification groups of $G=\Gal(L/K)$ which is compatible
with passage to quotients $G/H$ of $G$ by a normal
subgroup $H$.  The upper ramification breaks of $L/K$
are defined in terms of the lower breaks as follows:
First, $u_0=0$ is an upper break of $L/K$ if and only if
$b_0=0$ is a lower break.  The positive upper breaks
$u_1\le u_2\le\cdots\le u_n$ of $L/K$ are then defined
recursively by $u_1=b_1/m$ and
$u_{i+1}-u_i=(b_{i+1}-b_i)/mp^i$ for $1\le i\le n-1$.
We may view $u_i$ as the upper ramification break of
$L/K$ which corresponds to $b_i$.  The upper
ramification breaks of $L/K$, counted with
multiplicities, form a multiset, which we denote by
$\U_{L/K}$.  Note that if $L/K$ is a ramified
$C_p$-extension then $L/K$ has a single upper and lower
ramification break $u_1=b_1$.  Thus we may refer simply
to the ramification break of $L/K$.

     The upper ramification subgroups of $G$ are defined
for real $x\ge0$ by $G^0=G_0=G$, $G^x=G_{b_1}$ for
$0<x\le u_1$, $G^x=G_{b_i}$ for $u_{i-1}<x\leq u_i$, and
$G^x=\{\id_L\}$ for $x>u_n$.  Thus $u\ge0$ is an upper
ramification break of $L/K$ if and only if
$G^{u+\epsilon}\lneqq G^u$ for all $\epsilon>0$.  The
following theorem shows that the groups $G^x$ and the
upper ramification breaks are compatible with passage to
quotients of Galois groups:

\begin{theorem}[Herbrand] \label{herb}
Let $L/K$ be a finite totally ramified Galois extension
and let $M/K$ be a Galois subextension of $L/K$.  Set
$G=\Gal(L/K)$ and $H=\Gal(L/M)$.
\begin{enumerate}[(a)]
\item For $x\ge0$ we have $(G/H)^x=G^xH/H$.
\item $\U_{M/K}\subset\U_{L/K}$.
\end{enumerate}
\end{theorem}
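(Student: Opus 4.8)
The plan is to follow the classical route, via the Herbrand transition function, adapted to the discretized definition of the upper breaks given above. For the totally ramified extension $L/K$ with $G=\Gal(L/K)$ I would introduce the function $\varphi_{L/K}\colon[0,\infty)\to[0,\infty)$ defined by $\varphi_{L/K}(u)=\int_0^u[G_0:G_t]^{-1}\,dt$. This is a continuous, piecewise linear, strictly increasing bijection; write $\psi_{L/K}$ for its inverse. Splitting the integral at the lower breaks $b_1\le\cdots\le b_n$ and using that $G_t$ is constant on each interval between consecutive breaks, together with the index formula $|G_{b_i}:G_{b_i+1}|=p^{\,\mathrm{mult}(b_i)}$, one checks directly that $\varphi_{L/K}(b_i)=u_i$ for each $i$, where $u_i$ is the upper break attached to $b_i$ by the recursion $u_1=b_1/m$, $u_{i+1}-u_i=(b_{i+1}-b_i)/mp^i$. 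It follows that $G^x=G_{\psi_{L/K}(x)}$ for all $x\ge0$: the upper numbering is exactly the image of the lower numbering under $\psi_{L/K}$.

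Granting this dictionary, part (a) rests on two facts. First, \emph{Herbrand's lemma}: with $H=\Gal(L/M)\trianglelefteq G$ and $G/H=\Gal(M/K)$, one has $G_uH/H=(G/H)_{\varphi_{L/M}(u)}$ for all $u\ge0$. Second, \emph{transitivity of the transition function}: $\varphi_{L/K}=\varphi_{M/K}\circ\varphi_{L/M}$, equivalently $\psi_{L/K}=\psi_{L/M}\circ\psi_{M/K}$. The transitivity follows from Herbrand's lemma: both $\varphi_{L/K}$ and $\varphi_{M/K}\circ\varphi_{L/M}$ vanish at $0$ and are continuous, and their derivatives agree at non-break points, since by the chain rule, Herbrand's lemma, and the identities $H_t=H\cap G_t$ and $|G_tH/H|=|G_t|/|H_t|$ one gets $[(G/H)_0:(G/H)_{\varphi_{L/M}(t)}]\cdot[H_0:H_t]=[G_0:G_t]$. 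Given both facts, $(G/H)^x=(G/H)_{\psi_{M/K}(x)}=G_{\psi_{L/M}(\psi_{M/K}(x))}H/H=G_{\psi_{L/K}(x)}H/H=G^xH/H$, which is (a).

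The real work is Herbrand's lemma, and this is where I expect the main obstacle. One fixes $\sigma\in G$ with image $\overline{\sigma}\in G/H$ and compares $v_M(\overline{\sigma}(\pi_M)-\pi_M)$ with the valuations $v_L(\tau(\pi_L)-\pi_L)$ as $\tau$ runs over the coset $H\sigma$, after expressing $\pi_M$ (or a suitably chosen generator of $\OO_M$ over $\OO_K$) in terms of $\pi_L$ and its conjugates. The outcome is an identity relating ramification numbers — in the present normalization, $i(\overline{\sigma})+1=|H|^{-1}\sum_{\tau\in H\sigma}(i(\tau)+1)$ for $\overline{\sigma}\ne\id_M$ — from which $G_uH/H=(G/H)_{\varphi_{L/M}(u)}$ follows by unwinding the definitions of $\varphi_{L/M}$ and of the lower numbering. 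This coset computation is essentially the only place where the arithmetic of the extensions enters, as opposed to formal manipulation of filtrations; everything else is bookkeeping.

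Finally, (b) is immediate from (a). If $u>0$ is an upper break of $M/K$ with multiplicity $d'$, then for small $\epsilon>0$ we have $p^{d'}=|(G/H)^u:(G/H)^{u+\epsilon}|=|G^uH:G^{u+\epsilon}H|$; since $G^{u+\epsilon}\subseteq G^u\cap G^{u+\epsilon}H$, this index divides $|G^u:G^{u+\epsilon}|=p^d$, where $d\ge0$ is the multiplicity of $u$ in $\U_{L/K}$ (with $d=0$ when $u\notin\U_{L/K}$), so $d'\le d$. The break $u=0$ is handled trivially: $0\in\U_{M/K}$ forces $[M:K]$, hence $[L:K]$, to have nontrivial tame part, so $0\in\U_{L/K}$, each with multiplicity $1$. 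Therefore $\U_{M/K}\subseteq\U_{L/K}$ as multisets.
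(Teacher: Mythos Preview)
Your proposal is correct and is precisely the classical argument from Serre's \textit{Corps locaux}, Chapter~IV (Propositions~3 and~14 and the transitivity Proposition~15), which is exactly what the paper invokes by citation; the paper does not reprove (a) but defers to \cite{cl}, and derives (b) from (a) just as you do. In other words, you have faithfully unpacked the reference the paper relies on, so there is no divergence in approach.
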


\begin{proof}
Statement~(a) is proved as Proposition~14 in
\cite[IV]{cl}.  Statement~(b) follows easily from (a).
\end{proof}

\begin{cor} \label{quotcond}
Let $x\ge0$.  Then $x\not\in\U_{M/K}$ if and only if
$G^x\le G^{x+\epsilon}H$ for all sufficiently small
$\epsilon>0$.
\end{cor}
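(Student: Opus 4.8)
The plan is to combine Herbrand's theorem (Theorem~\ref{herb}(a)) with the elementary fact that the upper ramification filtration is a non-increasing, right-continuous step function, so that ``not being a break'' translates into a local constancy statement.

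First I would record, for an arbitrary finite totally ramified Galois extension $N/K$ with group $\Gamma$, the following reformulation of the definition of upper break: since $x\mapsto\Gamma^x$ is non-increasing and is constant on each half-open interval $(u_{i-1},u_i]$ cut out by the upper breaks $u_i$ of $N/K$, a real number $x\ge0$ fails to be an upper break of $N/K$ if and only if $\Gamma^{x+\epsilon}=\Gamma^x$ for all sufficiently small $\epsilon>0$ (equivalently, by monotonicity, for some $\epsilon>0$). Applying this with $N=M$ and $\Gamma=G/H$ gives: $x\notin\U_{M/K}$ if and only if $(G/H)^{x+\epsilon}=(G/H)^x$ for all sufficiently small $\epsilon>0$.

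Next I would substitute Theorem~\ref{herb}(a): $(G/H)^x=G^xH/H$ and $(G/H)^{x+\epsilon}=G^{x+\epsilon}H/H$. Hence the condition becomes $G^{x+\epsilon}H=G^xH$ for all sufficiently small $\epsilon>0$. Since $G^{x+\epsilon}\le G^x$ always holds, we automatically have $G^{x+\epsilon}H\le G^xH$, so the equality $G^{x+\epsilon}H=G^xH$ is equivalent to the reverse inclusion $G^xH\le G^{x+\epsilon}H$; and because $H\le G^{x+\epsilon}H$ already, this reverse inclusion holds precisely when $G^x\le G^{x+\epsilon}H$. Chaining the equivalences yields the corollary.

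There is no serious obstacle here; the only point needing a word of care is the interchange between ``for all sufficiently small $\epsilon$'' and ``for some $\epsilon$'', which is exactly what the monotonicity (right-continuous step-function shape) of the filtration provides, both for $G^{\bullet}$ and, via Herbrand, for $(G/H)^{\bullet}$.
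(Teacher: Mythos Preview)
Your proof is correct and follows essentially the same approach as the paper's: apply Theorem~\ref{herb}(a) to translate $x\notin\U_{M/K}$ into $G^xH=G^{x+\epsilon}H$, then observe that this equality is equivalent to $G^x\le G^{x+\epsilon}H$. The paper compresses all of this into a single sentence, but your expanded version spells out exactly the same reasoning.
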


\begin{proof}
This follows from (a) since $G^xH=G^{x+\epsilon}H$ if
and only if $G^x\le G^{x+\epsilon}H$.
\end{proof}

     We will make frequent use of the following
(presumably well-known) fact:

\begin{lemma} \label{fact1}
Let $N/K$ be a finite totally ramified Galois extension
and set $G=\Gal(N/K)$.  Assume that $Z(G)$ contains a
subgroup $H$ such that $H\cong C_p^2$, and let $M=N^H$
be the fixed field of $H$.  Suppose there are $u<v$ such
that $u,v\not\in\U_{M/K}$ and
$\U_{N/K}=\U_{M/K}\cup\{u,v\}$.  Let $b<c$ be the lower
ramification breaks of $N/K$ that correspond to $u,v$
and let $\SS$ denote the set of fields $L$ such that
$M\subset L\subset N$ and $[L:M]=p$.  Then there is
$L_0\in\SS$ with the following properties:
\begin{enumerate}[(a)]
\item $\U_{L_0/K}=\U_{M/K}\cup\{u\}$ and $N/L_0$ has
ramification break $c$.
\item For all $L\in\SS$ such that $L\not=L_0$ we have
$\U_{L/K}=\U_{M/K}\cup\{v\}$ and $N/L$ has ramification
break $b$.
\end{enumerate}
\end{lemma}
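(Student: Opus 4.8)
The plan is to reduce everything to a concrete description of the $C_p^2$-extension $N/M$, and then extract (a) and (b) from the positions of the order-$p$ subgroups of $H$ in the ramification filtration. First the routine reductions: since $H\le Z(G)$, every subgroup of $H$ is normal in $G$, so each $L\in\SS$ is Galois over $K$; write $H_L=\Gal(N/L)$, an order-$p$ subgroup of $H$. As $M\subset L\subset N$ with $M/K$ and $L/K$ Galois, Theorem~\ref{herb}(b) gives $\U_{M/K}\subset\U_{L/K}\subset\U_{N/K}$, and since $[L:K]=p\,[M:K]$ the multiset $\U_{L/K}$ has exactly one more element than $\U_{M/K}$; hence $\U_{L/K}=\U_{M/K}\cup\{t_L\}$ with $t_L\in\{u,v\}$. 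Also $N/L$ is totally ramified of degree $p$, hence has a single ramification break $r_L$. So the lemma reduces to computing $t_L$ and $r_L$ for each $L\in\SS$.

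The key step is to show that $N/M$ has lower ramification breaks exactly $b<c$. Since $H=\Gal(N/M)$ satisfies $H_x=H\cap G_x$, I describe $x\mapsto H\cap G_x$ by passing to the upper numbering. For $w\in\{u,v\}$ we have $G^w\supsetneq G^{w+\epsilon}$ (as $w\in\U_{N/K}$) and $G^wH=G^{w+\epsilon}H$ (as $w\notin\U_{M/K}$, by Corollary~\ref{quotcond}); Dedekind's modular law, applied to $G^{w+\epsilon}\le G^w$, gives $G^w=G^{w+\epsilon}(G^w\cap H)$, whence $G^w\cap H\supsetneq G^{w+\epsilon}\cap H$. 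So $x\mapsto G^x\cap H$ jumps at $u$ and at $v$; since $|H|=p^2$ it jumps nowhere else and each jump has index $p$, so $G^x\cap H$ equals $H$ for $x\le u$, a fixed order-$p$ subgroup $H_1$ for $u<x\le v$, and $\{\id\}$ for $x>v$. As $G^u=G_b$ and $G^v=G_c$ by the definition of the upper numbering, translating back to the lower numbering gives $H\cap G_x=H$ for $x\le b$, $=H_1$ for $b<x\le c$, and $=\{\id\}$ for $x>c$; in particular $N/M$ has lower breaks $b<c$ and $H_1=H\cap G_{b+1}$. Set $L_0=N^{H_1}\in\SS$.

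Now the computation. Using $(\Gal(N/L))_x=H_L\cap(H\cap G_x)$ and the description just obtained: for $L=L_0$ the filtration is $H_1,H_1,\{\id\}$ on $x\le b$, $b<x\le c$, $x>c$, so $N/L_0$ has break $c$; for $L\ne L_0$ we have $H_L\cap H_1=\{\id\}$ (distinct order-$p$ subgroups of $H\cong C_p^2$), so the filtration is $H_L,\{\id\},\{\id\}$ and $N/L$ has break $b$. For $t_L$, apply Corollary~\ref{quotcond} to $\Gal(L/K)=G/H_L$ at $x=u$: since $G^u\cap H=H$ we have $G^u=G^{u+\epsilon}H$, so $G^u\le G^{u+\epsilon}H_L$ iff $H\le G^{u+\epsilon}H_L$; as $G^{u+\epsilon}\cap H=H_1$, the modular law gives $G^{u+\epsilon}H_L\cap H=H_1H_L$, which equals $H$ iff $H_1\ne H_L$. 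Hence $u\in\U_{L/K}$ exactly when $H_L=H_1$, i.e.\ $t_{L_0}=u$ and $t_L=v$ for $L\ne L_0$. Combining the two computations, $L_0$ satisfies (a) and every other $L\in\SS$ satisfies (b).

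The main obstacle is the key step: one must verify that the two ``new'' upper breaks $u,v$ absorb \emph{all} of the jumping of $x\mapsto G^x\cap H$, so that $N/M$ has exactly two lower ramification breaks and the subgroup $H_1$ — hence the field $L_0$ — is well defined. This is exactly where both hypotheses $u,v\notin\U_{M/K}$ and $\U_{N/K}=\U_{M/K}\cup\{u,v\}$ are used, through Corollary~\ref{quotcond}, and where the constraint $|H|=p^2$ forces the filtration of $H$ into its simplest possible shape. Everything after that is routine manipulation of the lower filtration of $N/M$ restricted to the order-$p$ subgroups of $H$.
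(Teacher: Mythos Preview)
Your argument is correct and follows essentially the same route as the paper: establish that $N/M$ has lower breaks $b,c$, set $L_0=N^{H_c}$ (your $H_1$ equals the paper's $H_c$), read off the break of $N/L$ from the filtration of $H$, and use Corollary~\ref{quotcond} to pin down which of $u,v$ lies in $\U_{L/K}$. The only cosmetic differences are that you determine the filtration of $H$ by a global counting argument on $x\mapsto G^x\cap H$ (the paper instead exhibits an explicit $h\in H$ with $i(h)=b$), and you test membership in $\U_{L/K}$ at $x=u$ while the paper tests at $x=v$; both choices lead to the same dichotomy.
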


\begin{proof}
First we prove that the lower ramification breaks of
$N/M$ are $b,c$.  Since $b$ is a lower ramification
break of $L/K$ there exists $g\in G_b\smallsetminus
G_{b+1}$.  Since $u\not\in\U_{M/K}$, it follows from
Corollary~\ref{quotcond} that $G^u\le G^{u+\epsilon}H$
for sufficiently small $\epsilon>0$.  Since $G^u=G_b$
and $G^{u+\epsilon}=G_{b+1}$ it follows from
Theorem~\ref{herb}(a) that $G_b\le G_{b+1}H$.  Hence
there are $g'\in G_{b+1}$ and $h\in H$ such that
$g=g'h$.  It follows that $h=(g')^{-1}g\in
G_b\smallsetminus G_{b+1}$, so we have $i(h)=b$.  Thus
$b$ is a lower ramification break of $N/M$.  A similar
argument shows that $c$ is a lower ramification break of
$N/M$.  Now since the lower ramification breaks of $N/M$
are $b,c$, for each $L\in \SS$ the ramification break of
$N/L$ is either $b$ or $c$. Let $L_0=N^{H_c}$ be the
fixed field of $H_c=H_{b+\epsilon}$.  Since the
ramification break of $N/L$ is $\ge c$ if and only if
$\Gal(N/L)\le H_c$ we see that $N/L_0$ has ramification
break $c$, and $N/L$ has ramification break $b$ for all
$L\in\SS$ with $L\not=L_0$.

    To complete the proof let $L\in\SS$ and set
$A=\Gal(N/L)\le H$.  Since $c$ is the largest lower
break of $N/M$, for sufficiently small $\epsilon>0$ we
have
\[G^{v+\epsilon}\cap H=G_{c+1}\cap H
=H_{c+1}=\{\id_L\}.\]
It follows that $G^{v+\epsilon}\cap A=\{\id_L\}$, so we
get $|G^{v+\epsilon}A:G^{v+\epsilon}|=|A|=p$.  Since $v$
is an upper break of $N/K$ with multiplicity 1 we have
$|G^v:G^{v+\epsilon}|=p$.  Hence $A\leq G^v$ if and only
if $G^v\le G^{v+\epsilon}A$.  By
Corollary~\ref{quotcond} we deduce that
$v\not\in\U_{L/K}$ if and only if $A\le G^v=G_c$.  Hence
$v\not\in\U_{L/K}$ if and only if $A\le G_c\cap H=H_c$.
Since $\U_{L/K}$ is equal to either $\U_{M/K}\cup\{u\}$
or $\U_{M/K}\cup\{v\}$, we conclude that
$\U_{L/K}=\U_{M/K}\cup\{u\}$ if $L=L_0$ and
$\U_{L/K}=\U_{M/K}\cup\{v\}$ if $L\not=L_0$.
\end{proof}

\section{Embedding problems in characteristic $p$}
\label{emb}

Let $K$ be a field, let $L/K$ be a finite Galois
extension, and set $G=\Gal(L/K)$.  Let $\Gt$ be a finite
group and let $\phi:\Gt\ra G$ be an onto homomorphism.
A solution to the embedding problem associated to
$(L/K,\Gt,\phi)$ is a finite extension $M/L$ such that
$M$ is Galois over $K$ and there is an isomorphism of
exact sequences
\begin{equation*}
\setlength{\arraycolsep}{1pt}
\begin{array}{*{9}c}
1&\Lrightarrow&\Gal(M/L)&\Lrightarrow&\Gal(M/K)
&\Lrightarrow&\Gal(L/K)&\Lrightarrow&1 \\[1mm]
&&\Ldownarrow&&\Ldownarrow&&\parallel&& \\[-1mm]
1&\Lrightarrow&\ker\phi&\Lrightarrow&\Gt
&\overset{\tst\phi}{\Lrightarrow}&G&\Lrightarrow&1.
\end{array}
\end{equation*}
In this section we use a theorem of Witt to show that
certain embedding problems involving local fields of
characteristic $p$ always admit a solution which is a
totally ramified extension.

     Recall that the rank of a finite $p$-group $G$ is
the minimum size of a generating set for $G$.  Let
$\Phi(G)$ denote the Frattini subgroup of $G$.  It
follows from the Burnside basis theorem that the
Frattini quotient $G/\Phi(G)$ is an elementary abelian
$p$-group such that $\rank(G)$ is equal to
$\rank(G/\Phi(G))$.  In \cite[III]{witt}, Witt proved
the following:

\begin{theorem} \label{fields}
Let $K$ be a field of characteristic $p$ and let $L/K$
be a finite Galois extension such that $G=\Gal(L/K)$ is
a $p$-group.  Let $\Gt$ be a finite $p$-group such that
$\rank(\Gt)=\rank(G)$ and let $\phi:\Gt\ra G$ be an onto
homomorphism.  Then there is an extension $M/L$ which
solves the embedding problem associated to
$(L/K,\Gt,\phi)$.
\end{theorem}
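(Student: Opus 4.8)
The plan is to derive the theorem from the classical fact (essentially Witt's theorem, \cite[III]{witt}) that the Galois group of the maximal $p$-extension of a field of characteristic $p$ is a free pro-$p$ group. Let $K(p)$ denote the compositum inside $K^{sep}$ of all finite Galois $p$-extensions of $K$, and set $\mathcal{G}=\Gal(K(p)/K)$. Since $G=\Gal(L/K)$ is a $p$-group we have $L\subset K(p)$; put $\mathcal{H}=\Gal(K(p)/L)$, an open normal subgroup of $\mathcal{G}$, and let $\bar{\pi}\colon\mathcal{G}\to\mathcal{G}/\mathcal{H}=G$ be the quotient map, which is identified with the restriction map $\Gal(K(p)/K)\to\Gal(L/K)$. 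The one nonformal ingredient is that $\mathcal{G}$ is free pro-$p$: Artin--Schreier theory gives $H^2(\Gal(K^{sep}/K),\F_p)=0$ when $\ch K=p$, and an inflation--restriction argument (using that $\mathcal{G}$ is the maximal pro-$p$ quotient of $\Gal(K^{sep}/K)$) then yields $H^2(\mathcal{G},\F_p)=0$, which characterizes free pro-$p$ groups.

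Next I would extract the group theory behind the hypothesis $\rank(\Gt)=\rank(G)$. Since $\phi$ is onto and $\Gt,G$ are $p$-groups, $\phi(\Phi(\Gt))=\Phi(G)$, so the induced map $\bar{\phi}\colon\Gt/\Phi(\Gt)\to G/\Phi(G)$ is a surjection of $\F_p$-vector spaces of equal dimension $\rank(\Gt)=\rank(G)$, hence an isomorphism. In particular $\ker\phi\subseteq\Phi(\Gt)$; that is, $(L/K,\Gt,\phi)$ is a Frattini embedding problem.

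For the construction: a free pro-$p$ group is projective among pro-$p$ groups, so $\bar{\pi}$ lifts along the surjection $\phi$ to a continuous homomorphism $\psi\colon\mathcal{G}\to\Gt$ with $\phi\circ\psi=\bar{\pi}$. Then $\psi$ is automatically onto: $\psi(\mathcal{G})$ is a closed subgroup of $\Gt$ with $\phi(\psi(\mathcal{G}))=\bar{\pi}(\mathcal{G})=G$, so $\psi(\mathcal{G})\cdot\ker\phi=\Gt$, and since $\ker\phi\subseteq\Phi(\Gt)$ this forces $\psi(\mathcal{G})\cdot\Phi(\Gt)=\Gt$, whence $\psi(\mathcal{G})=\Gt$ because $\Phi(\Gt)$ consists of nongenerators. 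Moreover $\ker\psi\subseteq\ker(\phi\circ\psi)=\ker\bar{\pi}=\mathcal{H}$. I would then take $M=K(p)^{\ker\psi}$: it is a finite Galois extension of $K$ (as $\ker\psi$ is a normal subgroup of $\mathcal{G}$ of index $|\Gt|$), $\psi$ induces an isomorphism $\Gal(M/K)\cong\Gt$, and $M\supset L$ since $\ker\psi\subseteq\mathcal{H}$. Under these identifications $\phi$ becomes the restriction map $\Gal(M/K)\to\Gal(L/K)$ and $\ker\phi$ becomes $\Gal(M/L)$, so $M/L$ solves the embedding problem associated to $(L/K,\Gt,\phi)$.

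The only real obstacle is the freeness of $\mathcal{G}$, which is precisely where characteristic $p$ is used; granting Witt's theorem, everything else is a routine chase with the Frattini subgroup together with the tautology $\ker\psi\subseteq\mathcal{H}$.
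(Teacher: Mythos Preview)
The paper does not give its own proof of this statement; it simply quotes the result and attributes it to Witt with the citation \cite[III]{witt}. Your argument is correct and is the standard modern rendering of Witt's theorem: the vanishing of $H^2$ with $\F_p$-coefficients in characteristic $p$ (via the Artin--Schreier sequence and the five-term inflation--restriction sequence) makes $\mathcal{G}=\Gal(K(p)/K)$ a free, hence projective, pro-$p$ group, and the hypothesis $\rank(\Gt)=\rank(G)$ is exactly what forces $\ker\phi\subseteq\Phi(\Gt)$, so that the lift $\psi$ is automatically surjective. Since the paper offers no argument of its own to compare against, there is nothing further to say.
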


     We will use the following application of Witt's
theorem:

\begin{cor} \label{embedram}
Let $K$ be a local field of characteristic $p$ and let
$L/K$ be a finite totally ramified Galois extension
whose Galois group $G=\Gal(L/K)$ is a $p$-group.  Let
$\Gt$ be a finite $p$-group and let $\phi:\Gt\ra G$ be
an onto group homomorphism.  Then there is a totally
ramified field extension $M/L$ which solves the
embedding problem associated to $(L/K,\Gt,\phi)$.
\end{cor}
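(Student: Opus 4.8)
The plan is to deduce the corollary from Witt's theorem (Theorem~\ref{fields}) after two reductions: first to the case $\rank(\Gt)=\rank(G)$, and then, by induction on $|\ker\phi|$, to the case in which $\ker\phi$ has order $p$ and lies inside $\Phi(\Gt)$. For such an embedding problem the (a priori merely Galois) solution furnished by Theorem~\ref{fields} turns out to be automatically totally ramified, which is the crux. I expect the first reduction, which is the one place where $\ch K=p$ really enters beyond Theorem~\ref{fields} itself, to need the most care.

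\emph{Reduction to $\rank(\Gt)=\rank(G)$.} Since $\phi$ is onto, $r:=\rank(\Gt)\ge s:=\rank(G)$. If $r>s$, I would use that $\ch K=p$ to choose a totally ramified elementary abelian $p$-extension $E/K$ of degree $p^{r-s}$ that is linearly disjoint from $L$ and has $EL/K$ totally ramified: take $E=K(\wp^{-1}(a_1),\dots,\wp^{-1}(a_{r-s}))$ with $\wp(x)=x^p-x$ and the $a_i\in K$ having poles of distinct orders prime to $p$, chosen large enough relative to the ramification of $L/K$. Then $L':=EL$ has $\Gal(L'/K)\cong G':=G\times C_p^{r-s}$ with $\rank(G')=r$ and the restriction $G'\to G$ equal to the first projection. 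A Frattini-quotient computation then yields a surjection $\psi=(\phi,\mu)\colon\Gt\to G'$ with $\mathrm{pr}_1\circ\psi=\phi$: choose $\mu\colon\Gt\to C_p^{r-s}$ (which factors through $\Gt/\Phi(\Gt)$) so that $(\phi,\mu)$ is onto modulo Frattini subgroups, hence onto. Any totally ramified solution of $(L'/K,\Gt,\psi)$ is then a totally ramified solution of $(L/K,\Gt,\phi)$, so from now on I may assume $\rank(\Gt)=\rank(G)$.

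\emph{The induction.} If $\ker\phi=1$ take $M=L$. Otherwise pick a subgroup $Z$ of order $p$ inside $Z(\Gt)\cap\ker\phi$ (nontrivial because $\ker\phi$ is a nontrivial normal subgroup of the $p$-group $\Gt$); then $Z\trianglelefteq\Gt$. The key point is that $Z\subseteq\Phi(\Gt)$: otherwise $Z$ injects into $\Gt/\Phi(\Gt)$, so $\rank(\Gt/Z)=\rank(\Gt)-1$, yet $\phi$ factors through the surjection $\Gt/Z\to G$, forcing $\rank(\Gt/Z)\ge\rank(G)=\rank(\Gt)$, a contradiction. Put $\Gt_1=\Gt/Z$ and let $\phi_1\colon\Gt_1\to G$ be induced by $\phi$; then $\rank(\Gt_1)=\rank(\Gt)=\rank(G)$ and $|\ker\phi_1|=|\ker\phi|/p$, so by induction there is a totally ramified $M_1/L$ solving $(L/K,\Gt_1,\phi_1)$, with $M_1/K$ totally ramified and $\Gal(M_1/K)\cong\Gt_1$. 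Now apply Theorem~\ref{fields} to the embedding problem $(M_1/K,\Gt,q)$, where $q\colon\Gt\to\Gt_1=\Gal(M_1/K)$ is the quotient map: since $\rank(\Gt)=\rank(\Gt_1)$ this produces an extension $M/M_1$ solving it, with $M/K$ Galois, $\Gal(M/K)\cong\Gt$, and $\Gal(M/K)\to\Gal(M_1/K)$ identified with $q$; composing with $\phi_1$ shows $M/L$ solves $(L/K,\Gt,\phi)$.

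\emph{Automatic total ramification.} It remains to check that $M/K$ is totally ramified. Since $M_1/K$ is totally ramified and $[M:M_1]=p$, the maximal unramified subextension $M_{ur}/K$ of $M/K$ has degree $1$ or $p$. If $[M_{ur}:K]=p$, then $M_1\cap M_{ur}=K$ forces $M=M_1M_{ur}$, whence restriction gives $\Gal(M/K)\cong\Gt_1\times C_p$ via an isomorphism under which $q$ corresponds to the first projection; but then $Z=\ker q$ corresponds to the direct factor $1\times C_p$, which is not contained in $\Phi(\Gt_1\times C_p)=\Phi(\Gt_1)\times1$, contradicting $Z\subseteq\Phi(\Gt)$. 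Hence $M_{ur}=K$, so $M/K$ — and therefore $M/L$ — is totally ramified, completing the induction.
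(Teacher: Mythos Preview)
Your proof is correct and rests on the same two ideas as the paper's: (i) for the ``non-split'' situation one can invoke Witt's theorem and then argue that the solution is forced to be totally ramified, and (ii) the ``split'' situation is handled by adjoining totally ramified $C_p$-extensions with large ramification break. The difference is purely organizational. The paper reduces immediately to $|\ker\phi|=p$ and then splits into the two cases: if $\Gt\to G$ is split it builds $M=LF$ directly from a ramified $C_p$-extension $F/K$, and if it is non-split it shows $\rank(\Gt)=\rank(G)$, applies Theorem~\ref{fields}, and rules out an unramified step exactly as you do. You instead front-load all of the split part by enlarging $L$ to $L'=EL$ so that $\rank(\Gt)=\rank(\Gal(L'/K))$, and then run an induction in which every $C_p$-kernel automatically lies in $\Phi(\Gt)$; this is precisely the non-split condition, so each inductive step is the paper's non-split case. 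Your packaging is a bit more uniform (one clean induction after a single reduction), while the paper's is shorter because it avoids constructing the auxiliary surjection $\psi=(\phi,\mu)$ and the elementary-abelian extension $E$ of rank $r-s$.
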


\begin{proof}
Let $N=\ker\phi$.  It suffices to consider the case
where $N\cong C_p$.  If the extension $\Gt$ of $G$ by
$N$ is split then $\Gt\cong C_p\times G$.  In this case
choose a ramified $C_p$-extension $F/K$ whose
ramification break is greater than all the upper breaks
of $L/K$.  Then $L$ and $F$ are linearly disjoint over
$K$, so $M=LF$ is a totally ramified Galois extension of
$K$ with $\Gal(M/K)\cong C_p\times G\cong\Gt$.  Hence
$M$ solves the given embedding problem.  If the
extension $\Gt$ of $G$ by $N$ is not split we claim that
$\rank(\Gt)=\rank(G)$.  We clearly have
$\rank(\Gt)\ge\rank(G)$.  Let $A$ be a generating set
for $G$ such that $|A|=\rank(G)$ and let $\At\subset\Gt$
satisfy $|\At|=|A|$ and $\phi(\At)=A$.  If
$\langle\At\rangle\not=\Gt$ then
$\langle\At\rangle\cong G$ and
$\Gt=N\times\langle\At\rangle$.  This contradicts the
assumption that our extension is not split, so we must
have $\langle\At\rangle=\Gt$.  Hence
$\rank(\Gt)=\rank(G)$.  It follows by
Theorem~\ref{fields} that there is a field extension
$M/L$ which solves the given embedding problem.  If
$M/L$ is unramified let $F$ denote the unramified
extension of $K$ of degree $p$.  Then $F\subset M$, so
we get $M=LF$ and $\Gal(M/K)\cong C_p\times G$.  This is
a contradiction, so $M/L$ is a totally ramified
extension.
\end{proof}

\section{Minimal nonabelian $p$-groups} \label{min}

We put a partial order on finite $p$-groups by
$H\preccurlyeq G$ if $H$ is isomorphic to a quotient of
$G$.  We are interested in the groups which are
$\preccurlyeq$-minimal among nonabelian $p$-groups.  We
call such a group a minimal nonabelian $p$-group.

\begin{prop} \label{abcd}
Let $p>2$ and let $G$ be a $p$-group.  Then $G$ is a
minimal nonabelian $p$-group if and only if $G$
satisfies the following conditions:
\begin{enumerate}[(i)]
\item $G$ is nilpotent of class 2.
\item $Z(G)$ is cyclic of order $p^d$ for some $d\ge1$.
\item $[G,G]$ is the subgroup of $Z(G)$ of order $p$.
\item $\Gbar:=G/Z(G)$ is an elementary abelian $p$-group
of rank $2n$ for some $n\ge1$, and $[\;,\:]$ induces a
nondegenerate skew-symmetric $\F_p$-bilinear form
$(\;,\:)_{\Gbar}$ on $\Gbar$ with values in $[G,G]$.
\end{enumerate}
\end{prop}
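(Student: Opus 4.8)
The plan is to prove both directions of the equivalence, leaning on the structure theory of $p$-groups of class $2$ for $p>2$.

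First suppose $G$ is a minimal nonabelian $p$-group. If $G$ were nonabelian of class $\ge 3$, then $G/[[G,G],G]$ would be a proper nonabelian quotient (it is nonabelian since $[G,G]\not\le [[G,G],G]$ by the Burnside basis theorem applied to the lower central series), contradicting minimality; this establishes (i). Since $G$ is nonabelian of class $2$, $[G,G]$ is a nontrivial subgroup of $Z(G)$. For (iii) and (ii): if $Z(G)$ contained a subgroup $H\cong C_p\times C_p$ then one could choose $H$ to meet $[G,G]$ trivially unless $[G,G]$ itself has rank $\ge 2$; in either case $G/H$ would be a proper quotient, and one checks it is still nonabelian (the commutator form on $G$ does not become identically trivial modulo $H$), contradicting minimality — hence both $Z(G)$ is cyclic and $[G,G]$ has order $p$, and since $[G,G]\le Z(G)$ with $Z(G)$ cyclic, $[G,G]$ is forced to be the unique subgroup of $Z(G)$ of order $p$. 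For (iv): because $G$ has class $2$, the commutator map factors through a well-defined alternating $\F_p$-bilinear form on $\Gbar=G/Z(G)$ with values in $[G,G]\cong\F_p$; moreover $\Gbar$ is elementary abelian because $G^p\le Z(G)$ (as $p>2$, the map $x\mapsto x^p$ modulo $[G,G]$ is a homomorphism and $(xy)^p\equiv x^py^p$). The form is nondegenerate: if $gZ(G)$ were in the radical then $g$ would be central, so $g\in Z(G)$; hence $\dim_{\F_p}\Gbar$ is even, say $2n$, and $n\ge 1$ since $G$ is nonabelian.

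Conversely, suppose $G$ satisfies (i)--(iv). Then $G$ is nonabelian because $[G,G]\ne 1$ by (iii). It remains to show every proper quotient $G/N$ is abelian, i.e. $[G,G]\le N$ for every normal subgroup $N\ne 1$. Suppose not: then $N\cap [G,G]=1$, so $N$ injects into $G/[G,G]$, which is abelian, and hence $N\le Z(G)$ (a normal subgroup meeting $[G,G]$ trivially in a class-$2$ group is central). But $N\le Z(G)$ and $Z(G)$ is cyclic by (ii), so the unique subgroup of order $p$ in $Z(G)$ — which is $[G,G]$ by (iii) — lies in $N$, contradicting $N\cap[G,G]=1$. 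Therefore $[G,G]\le N$, so $G/N$ is abelian, and $G$ is minimal nonabelian.

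The main obstacle is the forward direction, specifically verifying in each case that the candidate proper quotient remains nonabelian: collapsing a rank-$2$ central subgroup, or a noncentral normal subgroup, must be shown not to kill the commutator, and this is exactly where nondegeneracy of the form and the cyclicity of $Z(G)$ interact. Care is also needed with the hypothesis $p>2$, which is used to ensure $G/Z(G)$ is elementary abelian and that the commutator pairing is genuinely alternating rather than merely skew-symmetric; for $p=2$ these conclusions can fail.
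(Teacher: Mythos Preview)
Your converse direction and your handling of (iv) are correct and essentially match the paper. Your argument for (i) also works, though the reason $\gamma_3(G)\lneq[G,G]$ is simply that $p$-groups are nilpotent (so the lower central series is strictly decreasing until it terminates), not the Burnside basis theorem. The paper does this more directly: since $Z(G)\ne1$, minimality forces $G/Z(G)$ to be abelian, hence $G$ has class $\le 2$.

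The genuine gap is in your argument for (ii) and (iii). The claim ``if $Z(G)$ contains $H\cong C_p\times C_p$ then one can choose such an $H$ meeting $[G,G]$ trivially unless $[G,G]$ has rank $\ge 2$'' is false: take $Z(G)=C_p\times C_p$ with $[G,G]$ a $C_p$ subgroup; the only copy of $C_p^2$ inside $Z(G)$ is $Z(G)$ itself, and it contains $[G,G]$. Your fallback in the rank-$\ge 2$ case (``one checks it is still nonabelian'') is likewise unjustified as written: if $[G,G]$ happens to lie inside your chosen $H$, then $G/H$ is abelian and you get no contradiction. You correctly flagged this spot as the main obstacle, and indeed the argument as stated does not go through.

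The clean fix---which is exactly the paper's approach---avoids the case analysis altogether. Minimality says that for every nontrivial normal $N\trianglelefteq G$ the quotient $G/N$ is abelian, i.e.\ $[G,G]\le N$. In particular $[G,G]$ lies in \emph{every} nontrivial subgroup of $Z(G)$. If $Z(G)$ had two distinct subgroups $A,B$ of order $p$ then $[G,G]\le A\cap B=1$, contradicting nonabelianness; hence $Z(G)$ has a unique subgroup of order $p$ and is therefore cyclic, and $[G,G]$, being nontrivial and contained in that unique $C_p$, must equal it. This single observation replaces your attempted construction of explicit nonabelian quotients.
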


\begin{proof}
Suppose $G$ is a minimal nonabelian $p$-group.  Since
$Z(G)$ is nontrivial, $\Gbar$ is abelian by the
minimality of $G$.  Hence $G$ is nilpotent of class 2,
which gives (i).  Let $N$ be a nontrivial normal
subgroup of $G$.  Then $G/N$ is abelian by the
minimality of $G$, so $[G,G]\le N$.  Hence $[G,G]$ is
contained in all nontrivial subgroups of $Z(G)$, so
$Z(G)$ is cyclic and $[G,G]$ is the unique subgroup of
$Z(G)$ of order $p$.  This proves (ii) and (iii).

     Let $z$ be a generator for $Z(G)\cong C_{p^d}$ and
set $w=z^{p^{d-1}}$.  Then $[G,G]=\langle w\rangle$.
For $x,y\in G$ we have $xyx^{-1}=yw^a$ for some
$a\in\Z$.  Hence $xy^px^{-1}=y^pw^{pa}=y^p$, so
$y^p\in Z(G)$.  It follows that $\Gbar$ is an
elementary abelian $p$-group.  Let $x,y_1,y_2\in G$.
Then there are $a_j\in\Z$ such that
$xy_jx^{-1}=y_jw^{a_j}$ for $j=1,2$.  It follows that
$xy_1y_2x^{-1}=y_1y_2w^{a_1+a_2}$, and hence that
$[x,y_1y_2]=[x,y_1][x,y_2]$.  Since $[y,x]=[x,y]^{-1}$,
we deduce that $[\;,\:]$ induces a skew-symmetric
$\F_p$-bilinear pairing $(\;,\:)_{\Gbar}$ on $\Gbar$.  If
$[x,y]=1$ for all $y\in G$ then $x\in Z(G)$, so the
pairing is nondegenerate.  Therefore $\Gbar$ has even
$\F_p$-rank.  This proves (iv).

     Conversely, suppose (i)--(iv) hold.  Then $G$ is
nonabelian by (i) or (iii).  Let $N$ be a nontrivial
normal subgroup of $G$.  Then $N\cap Z(G)$ is
nontrivial, so $[G,G]\le N$ by (ii) and (iii).  Hence
$G/N$ is abelian, so $G$ is a minimal nonabelian
$p$-group.
\end{proof}

     The minimal nonabelian $p$-groups can be described
more explicitly.  For $n,d\ge1$ we define a group
$H(n,d)$ of order $p^{2n+d}$ generated by
$x_1,\ldots,x_n,y_1,\ldots,y_n,z$, with $|x_i|=|y_i|=p$
and $|z|=p^d$.  All these generators commute with each
other, except for $x_i$ and $y_i$, which satisfy
$[x_i,y_i]=z^{p^{d-1}}$ for $1\le i\le n$.  Thus
$H(1,1)$ is the Heisenberg $p$-group, and $H(n,1)$ is an
extraspecial $p$-group.

     We define another group $A(n,d)$ of order
$p^{2n+d}$ generated by
$x_1,\ldots,x_n,y_1,\ldots,y_n,z$.  In $A(n,d)$ we have
$|x_i|=p$ for $2\le i\le n$, $|y_i|=p$ for
$1\le i\le n$, and $x_1^p=z$ with $|z|=p^d$.  As with
$H(n,d)$, all generators commute with each other except
for $x_i$ and $y_i$, which satisfy
$[x_i,y_i]=z^{p^{d-1}}$ for $1\le i\le n$.  Thus
$A(1,1)$ is the metacyclic group of order $p^3$, and
$A(n,1)$ is an extraspecial $p$-group.

     It is clear from the constructions that the groups
$H(n,d)$ and $A(n,d)$ satisfy conditions (i)--(iv) of
Proposition~\ref{abcd}.  We now prove the converse,
which states that every minimal nonabelian $p$-group is
isomorphic to one of these groups.

\begin{prop} \label{classify}
Let $p>2$ and let $G$ be a minimal nonabelian $p$-group.
Then either $G\cong H(n,d)$ or $G\cong A(n,d)$ for some
$n,d\ge1$.
\end{prop}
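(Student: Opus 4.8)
The plan is to use the structure theory established in Proposition~\ref{abcd}: $G$ is nilpotent of class~$2$, $Z(G)=\langle z\rangle$ is cyclic of order $p^d$, $[G,G]=\langle w\rangle$ where $w=z^{p^{d-1}}$, and $\Gbar=G/Z(G)$ is an elementary abelian $p$-group of rank $2n$ carrying a nondegenerate skew-symmetric $\F_p$-bilinear form $(\;,\:)_{\Gbar}$ with values in $[G,G]\cong\F_p$. First I would invoke the standard classification of such alternating forms over $\F_p$: there is an $\F_p$-basis $\bar{x}_1,\bar{y}_1,\ldots,\bar{x}_n,\bar{y}_n$ of $\Gbar$ that is symplectic, meaning $(\bar{x}_i,\bar{y}_i)_{\Gbar}=w$ for all $i$ and all other pairs of basis vectors pair to $1$ (i.e.\ to the identity of $[G,G]$). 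Lift each $\bar{x}_i,\bar{y}_i$ to elements $x_i,y_i\in G$; then by construction $[x_i,y_i]=z^{p^{d-1}}$ for $1\le i\le n$, while $[x_i,x_j]$, $[y_i,y_j]$, and $[x_i,y_j]$ for $i\ne j$ all lie in $Z(G)$ (since $\Gbar$ is abelian) and in fact equal $1$ (since the form vanishes on those pairs, and $[G,G]$ is the \emph{only} subgroup of $Z(G)$ of order $p$, so any commutator lands in $\langle w\rangle$ and the form computes which power of $w$). So the $x_i,y_i,z$ generate $G$ and satisfy all the relations defining $H(n,d)$ except possibly the orders $|x_i|=|y_i|=p$.

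Next I would analyze the orders of the lifts. Since $\Gbar$ is elementary abelian, $x_i^p\in Z(G)=\langle z\rangle$ and likewise $y_i^p\in\langle z\rangle$; write $x_i^p=z^{a_i}$, $y_i^p=z^{c_i}$. Because $z$ has order $p^d$ and $x_i,y_i$ are only determined modulo $Z(G)$, I can try to adjust the lifts to kill these powers. The key observation is that replacing $x_i$ by $x_i z^{t}$ changes $x_i^p$ to $x_i^p z^{tp}$ (using class~$2$, so $z$ is central and no correction terms appear), i.e.\ $a_i\mapsto a_i+tp$. Thus $a_i$ can be normalized modulo $p$; we can arrange $a_i\in\{0,1,\ldots,p-1\}$, and similarly $c_i$, \emph{and} we may further rescale: since $p>2$, the map $s\mapsto s^p$ is a bijection on $\F_p$, so by replacing $x_i$ by a suitable power $x_i^s$ (which is legitimate because $\langle x_i,y_i\rangle$-type relations rescale controllably — here one must check the effect on the commutator $[x_i^s,y_i]=[x_i,y_i]^s=z^{sp^{d-1}}$, and compensate by also replacing $y_i$ so that the product of the scalars is $1$) we can reduce each nonzero $a_i,c_i$ to the value $1$. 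After these normalizations, each pair $(x_i,y_i)$ falls into one of: both $x_i^p=y_i^p=1$ (a "type $H$" block), or exactly one of them has $p$th power equal to $z$ and the other is trivial (a "type $A$" block). If some $y_i$ has $p$th power $z$ I swap the roles of $x_i$ and $y_i$ (the form is alternating, so swapping costs only a sign, again absorbed since $p>2$), so I may assume in each block $y_i^p=1$ and $x_i^p\in\{1,z\}$.

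The final step is to show that \emph{at most one} block can be of type $A$, and that having one type-$A$ block forces $G\cong A(n,d)$ while having none forces $G\cong H(n,d)$. Here is the point: suppose two blocks, say $i=1$ and $i=2$, both have $x_1^p=x_2^p=z$. Then $(x_1 x_2^{-1})^p = x_1^p x_2^{-p}=1$ (class~$2$ again), and $x_1 x_2^{-1}$ together with the appropriate partner still forms a symplectic pair with the right commutator, so after the change of basis $x_1\mapsto x_1$, $x_2\mapsto x_1 x_2^{-1}$ (and a matching adjustment of $y_1,y_2$ to preserve the symplectic form — this is an elementary symplectic transvection-type move over $\F_p$) we have reduced the number of type-$A$ blocks by one. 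Iterating, we reach a presentation with $0$ or $1$ type-$A$ blocks. In the former case the generators and relations are exactly those of $H(n,d)$, so the natural surjection $H(n,d)\twoheadrightarrow G$ is an isomorphism by counting orders ($|H(n,d)|=p^{2n+d}=|G|$, the latter by Proposition~\ref{abcd}(ii)--(iv)). In the latter case, relabeling so the type-$A$ block is $i=1$, the presentation is exactly that of $A(n,d)$, and again order-counting gives $G\cong A(n,d)$. The main obstacle I anticipate is the bookkeeping in the "change of basis" steps: each time I modify $x_i$ or $y_i$ I must simultaneously modify its symplectic partner (and possibly absorb a scalar, which uses $p>2$ so that $p$th powers and square roots behave well in $\F_p^\times$) to keep the full set of commutator relations $[x_i,y_i]=z^{p^{d-1}}$, $[x_i,x_j]=[y_i,y_j]=[x_i,y_j]=1$ intact; organizing this as a sequence of symplectic basis changes on $\Gbar$ lifted to $G$, and checking at each stage that the $p$th-power data transforms as claimed, is the technical heart of the argument, but it is entirely elementary given class~$2$ and $p>2$.
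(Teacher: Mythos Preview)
Your approach is correct in outline and shares the essential ingredients with the paper's proof: both rest on the symplectic structure of $\Gbar$ from Proposition~\ref{abcd} and on the class-$2$ identity $(xy)^p=x^py^p[y,x]^{\binom{p}{2}}$, which for $p>2$ gives $(xy)^p\equiv x^py^p\pmod{Z(G)^p}$. The organization differs, however. Rather than picking an arbitrary symplectic basis and then iteratively normalizing, the paper packages the $p$th-power data as a single $\F_p$-linear map $\phi:\Gbar\to Z(G)/Z(G)^p$, $xZ(G)\mapsto x^pZ(G)^p$, and splits into two cases. If $\phi$ is trivial, every lift can be adjusted to order $p$ and $G\cong H(n,d)$. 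If $\phi$ is nontrivial, its kernel $V$ is a hyperplane whose orthogonal line $V^\perp=\langle\ybar_1\rangle$ satisfies $\ybar_1\in V$ (the form is alternating), so $y_1$ lifts to order $p$; any $\xbar_1$ with $(\xbar_1,\ybar_1)_{\Gbar}=w$ lies outside $V=\ybar_1^\perp$, so $x_1^p$ generates $Z(G)$; and the complementary symplectic subspace sits inside $V$, so all remaining lifts adjust to order $p$. This pins down the single type-$A$ block in one step, bypassing your transvection-by-transvection reduction.

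One small gap in your write-up: after reducing each $a_i,c_i$ to $\{0,1\}$ you assert that each block is of type $H$ or type $A$, but the case $x_i^p=y_i^p=z$ is not covered, and swapping does nothing there. The fix is immediate---replace $x_i$ by $x_iy_i^{-1}$, which has $(x_iy_i^{-1})^p=1$ and still $[x_iy_i^{-1},y_i]=w$---but it is one more of the ``bookkeeping'' moves you defer. Also, your appeal to ``$s\mapsto s^p$ is a bijection on $\F_p$'' is beside the point (that map is the identity by Fermat); what actually lets you normalize a nonzero $a_i$ to $1$ is that $a_i$ is a unit in $\Z/p^d\Z$, so $(x_i^s)^p=z^{sa_i}=z$ for $s\equiv a_i^{-1}\pmod{p^d}$.
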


\begin{proof}
Since $G$ is a minimal nonabelian $p$-group, $G$
satisfies conditions (i)--(iv) of Proposition~\ref{abcd}.
Let $z$ be a generator for $Z(G)$; then $|z|=p^d$ for
some $d\ge1$.  Set $w=z^{p^{d-1}}$, so that
$\langle w\rangle=[G,G]$.  It follows from (iv) that
$x^p\in Z(G)$ for all $x\in G$.  For $x\in G$
set $\xbar=xZ(G)\in \Gbar$.

     Suppose that $x^p\in Z(G)^p$ for all $x\in G$.
Since $[\;,\:]$ induces a nondegenerate $\F_p$-linear
pairing $(\;,\:)_{\Gbar}$ on $\Gbar$, there is an
$\F_p$-basis
$\{\xbar_1,\ldots,\xbar_n,\ybar_1,\ldots,\ybar_n\}$ for
$\Gbar$ such that
$(\xbar_i,\xbar_j)_{\Gbar}=(\ybar_i,\ybar_j)_{\Gbar}=w^0$ and
$(\xbar_i,\ybar_j)_{\Gbar}=w^{\delta_{ij}}$ for all
$1\le i,j\le n$.  Let $x_i'\in G$ be such that
$\xbar_i=x_i'Z(G)$.  Then there is $a_i\in\Z$ such that
$(x_i')^p=z^{pa_i}$.  Therefore $x_i=x_i'z^{-a_i}$
satisfies $x_iZ(G)=\xbar_i$ and $|x_i|=p$.  Similarly,
there are $y_i\in G$ with $y_iZ(G)=\ybar_i$ and
$|y_i|=p$.  It follows that $G\cong H(n,d)$.

     Now assume that there exists $x\in G$ such that
$x^p\not\in Z(G)^p$.  Define $\phi:\Gbar\ra Z(G)/Z(G)^p$
by $\phi(xZ(G))=x^pZ(G)^p$.  Then $\phi$ is clearly
well-defined.  We claim that $\phi$ is a group
homomorphism, and hence an $\F_p$-linear map.  Let
$x,y\in G$; then $[x,y]=z^a$ for some integer $a$.  Thus
$yx=xyz^{-a}$, so we get $(xy)^p=x^py^pz^{pb}$ with
$b=-\frac12(p-1)a$.  Hence $\phi(xy)=\phi(x)\phi(y)$.
By our assumption, $\phi$ is nontrivial, so
$\phi(\Gbar)=Z(G)/Z(G)^p$ is cyclic of order $p$.  Set
$V=\ker\phi$ and let $V^{\perp}$ be the orthogonal
complement of $V$ with respect to the pairing
$(\;,\:)_{\Gbar}$.  Then $V$ and $V^{\perp}$ are
$\F_p$-subspaces of $\Gbar$, with $\dim_{\F_p}(V)=2n-1$
and $\dim_{\F_p}(V^{\perp})=1$.  Let $y_1'\in G$ be such
that $\ybar_1'=y_1'Z(G)$ generates $V^{\perp}$.  Since
$[y_1',y_1']=w^0$ we have
$\ybar_1'\in(V^{\perp})^{\perp}=V=\ker\phi$.  Hence
there is $a\in\Z$ such that $(y_1')^p=z^{ap}$.  Then
$y_1=y_1'z^{-a}$ satisfies $\ybar_1=\ybar_1'$ and
$|y_1|=p$.  Now let $x_1\in G$ be such that
$(\xbar_1,\ybar_1)_{\Gbar}=w^1$.  Then $\xbar_1\not\in V$, so
$x_1^p$ is a generator for $Z(G)$.  Therefore we may
assume that $x_1^p=z$.  Let $W$ denote the span of
$\{\xbar_1,\ybar_1\}$ in $\Gbar$.  Then the restriction
of $(\;,\:)_{\Gbar}$ to $W$ is nondegenerate, so
$V=W\oplus W^{\perp}$ and the restriction of
$(\;,\:)_{\Gbar}$
to $W^{\perp}$ is a nondegenerate skew-symmetric
$\F_p$-bilinear form.  Hence there is a basis
$\{\xbar_2,\ldots,\xbar_n,\ybar_2,\ldots,\ybar_n\}$ for
$W^{\perp}$ such that
$(\xbar_i,\xbar_j)_{\Gbar}=(\ybar_i,\ybar_j)_{\Gbar}=w^0$ and
$(\xbar_i,\ybar_j)_{\Gbar}=w^{\delta_{ij}}$ for $2\le i,j\le n$.
Let $x_i'\in G$ be such that $\xbar_i=x_i'Z(G)$, and let
$a_i\in\Z$ satisfy $(x_i')^p=z^{a_i}$.  Since
$2\le i\le n$ we have $a_i=pb_i$ for some $b_i\in\Z$.
Hence $x_i=x_i'z^{-b_i}$ satisfies $x_iZ(G)=\xbar_i$ and
$|x_i|=p$.  Similarly for $2\le i\le n$ there are
$y_i\in G$ such that $y_iZ(G)=\ybar_i$ and $|y_i|=p$.
Therefore $G\cong A(n,d)$.
\end{proof}

\begin{remark}
A $p$-group $G$ is said to be of symplectic type if
every abelian characteristic subgroup of $G$ is cyclic.
Philip Hall, in unpublished notes, showed that for
$p>2$ the nonabelian $p$-groups of symplectic type are
precisely the groups $H(n,d)$ and $A(n,d)$ for $n,d\ge1$.
Therefore for $p>2$ the minimal nonabelian $p$-groups
are the same as the nonabelian $p$-groups of symplectic
type.  A proof of Hall's result can be found in
\cite[5.4.9]{Go80}.  We thank Peter Sin for pointing us
to this reference.
\end{remark}

     Let $G$ be a group and let $N_1,N_2$ be subgroups
of $G$.  Say that $G$ is a central product of $N_1$ and
$N_2$ if $N_1\cup N_2$ generates $G$ and every element
of $N_1$ commutes with every element of $N_2$.  In that
case there is a subgroup $A$ of $Z(N_1)\times Z(N_2)$
such that $G\cong(N_1\times N_2)/A$.

     We wish to express minimal nonabelian $p$-groups as
central products, with $H(1,1)$ as one of the factors.
For convenience we extend the definition of $H(n,d)$ by
setting $H(0,d)=C_{p^d}$.

\begin{prop} \label{central}
\begin{enumerate}[(a)]
\item Let $n,d\ge1$.  Then $H(n,d)$ is a central product
of subgroups $N_1$ and $N_2$, with $N_1\cong H(n-1,d)$
and $N_2\cong H(1,1)$.  More precisely,
\begin{equation} \label{cenH}
H(n,d)\cong(H(n-1,d)\times H(1,1))/B
\end{equation}
for some subgroup $B$ of $Z(H(n-1,d))\times Z(H(1,1))$
of order $p$.
\item Let $n\ge2$ and $d\ge1$.  Then $A(n,d)$ is a
central product of subgroups $N_1$ and $N_2$, with
$N_1\cong A(n-1,d)$ and $N_2\cong H(1,1)$.  More
precisely,
\begin{equation} \label{cenA}
A(n,d)\cong(A(n-1,d)\times H(1,1))/B
\end{equation}
for some subgroup $B$ of $Z(H(n-1,d))\times Z(H(1,1))$
of order $p$.
\end{enumerate}
\end{prop}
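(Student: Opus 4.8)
The plan is to realize the two factors explicitly inside the given presentations and then appeal to the general fact about central products recorded just above the proposition. Write $G$ for the group under consideration ($H(n,d)$ in part (a), $A(n,d)$ in part (b)), and using the stated generators of $G$ set $N_1=\langle x_1,\ldots,x_{n-1},y_1,\ldots,y_{n-1},z\rangle$ and $N_2=\langle x_n,y_n\rangle$. From the defining relations, $x_n$ and $y_n$ each commute with every generator of $N_1$, so $N_1$ and $N_2$ commute elementwise; hence $N_1N_2$ is a subgroup of $G$, and since it contains every generator of $G$ we get $N_1N_2=G$. Moreover $z^{p^{d-1}}=[x_n,y_n]$ lies in $N_2$ and clearly lies in $N_1$, so the commutator subgroup $[G,G]=\langle z^{p^{d-1}}\rangle$ is contained in $N_1\cap N_2$, whence $|N_1\cap N_2|\ge p$.

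Before identifying the factors I would settle the orders. Reading off the relations satisfied in $G$ by $x_1,\ldots,x_{n-1},y_1,\ldots,y_{n-1},z$ exhibits $N_1$ as a homomorphic image of $H(n-1,d)$ in case (a) (using the convention $H(0,d)=C_{p^d}$) and of $A(n-1,d)$ in case (b), so $|N_1|\le p^{2n+d-2}$; likewise $N_2$ is a homomorphic image of $H(1,1)$, so $|N_2|\le p^3$. Since $|N_1|\,|N_2|=|G|\cdot|N_1\cap N_2|\ge p^{2n+d}\cdot p$, all three estimates must be equalities: $|N_1|=p^{2n+d-2}$, $|N_2|=p^3$, and $|N_1\cap N_2|=p$. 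A surjection between finite groups of the same order is an isomorphism, so $N_1\cong H(n-1,d)$ (resp.\ $A(n-1,d)$) and $N_2\cong H(1,1)$.

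It now follows that $G$ is a central product of $N_1$ and $N_2$, so by the fact recalled just before the proposition there is a subgroup $B\le Z(N_1)\times Z(N_2)$ with $G\cong(N_1\times N_2)/B$, and comparing orders gives $|B|=|N_1\times N_2|/|G|=p$. This is exactly \eqref{cenH} in case (a); in case (b) it gives \eqref{cenA} once one notes that $Z(N_1)=Z(A(n-1,d))$ is cyclic of order $p^d$, hence isomorphic to $Z(H(n-1,d))$ as the statement requires. I expect the only genuinely delicate point to be ruling out the possibility that $N_1$ or $N_2$ is a proper quotient of the named group; this is precisely what the order bookkeeping in the second paragraph accomplishes, which is why it precedes the identification of the factors. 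Finally, the hypothesis $n\ge2$ in part (b) is needed to ensure that $x_n$ has order $p$ in $A(n,d)$ (so that $N_2\cong H(1,1)$ rather than something larger) and that $N_1$ has the stated form $A(n-1,d)$ with $n-1\ge1$.
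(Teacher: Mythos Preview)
Your argument is correct and follows essentially the same route as the paper: you take the same subgroups $N_1$ and $N_2$ (your $N_2=\langle x_n,y_n\rangle$ equals the paper's $\langle x_n,y_n,z^{p^{d-1}}\rangle$ since $z^{p^{d-1}}=[x_n,y_n]$), verify the central-product conditions, and read off $|B|=p$ by comparing orders. The only difference is that where the paper simply asserts $N_1\cong H(n-1,d)$ and $N_2\cong H(1,1)$, you supply the order bookkeeping to justify this; that extra care is welcome and fills in what the paper leaves implicit.
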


\begin{proof}
(a) Let $N_1$ be the subgroup of $H(n,d)$ generated by
$x_1,\ldots,x_{n-1},y_1,\ldots,y_{n-1},z$ and let $N_2$
be the subgroup of $H(n,d)$ generated by
$x_n,y_n,z^{p^{d-1}}$.  Then $N_1\cong H(n-1,d)$,
$N_2\cong H(1,1)$, and $N_1,N_2$ satisfy the conditions
for a central product.  Therefore there is $B$
satisfying (\ref{cenH}).  Since
$|H(n,d)|=p^{2n+d}$, $|H(n-1,d)|=p^{2n+d-2}$, and
$|H(1,1)|=p^3$, we must have $|B|=p$.
\\[\smallskipamount]
(b) Let $N_1$ be the subgroup of $A(n,d)$ generated by
$x_1,\ldots,x_{n-1},y_1,\ldots,y_{n-1},z$ and let $N_2$
be the subgroup of $A(n,d)$ generated by
$x_n,y_n,z^{p^{d-1}}$.  Then $N_1\cong A(n-1,d)$,
$N_2\cong H(1,1)$, and $N_1,N_2$ satisfy the conditions
for a central product.  Therefore there is $B$ satisfying
(\ref{cenA}).  Since $|A(n,d)|=p^{2n+d}$,
$|A(n-1,d)|=p^{2n+d-2}$, and $|H(1,1)|=p^3$, we must
have $|B|=p$.
\end{proof}

Proposition~\ref{central}(b) does not apply to
groups of the form $A(1,d)$.  Instead, we use the
following description:

\begin{prop} \label{A1d}
Let $d\ge1$, and write $H(1,1)=\langle x_1,y_1,z\rangle$,
$C_{p^{d+1}}=\langle w\rangle$.  Define a subgroup $G_d$
of $H(1,1)\times C_{p^{d+1}}$ by
$G_d=\langle x_1w,y_1,z\rangle$, and set
\[\Gbar_d=G_d/\langle(x_1w)^{p^d}z^{-1}\rangle
=G_d/\langle w^{p^d}z^{-1}\rangle.\] 
Then $\Gbar_d\cong A(1,d)$.
\end{prop}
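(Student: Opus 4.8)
The plan is to realize both $A(1,d)$ and $\Gbar_d$ as the metacyclic group
\[
\Gamma=\bigl\langle\, s,t \,\bigm|\, s^{p^{d+1}}=t^{p}=1,\ t^{-1}st=s^{1+p^{d}}\,\bigr\rangle ,
\]
by constructing surjections $\Gamma\twoheadrightarrow A(1,d)$ and $\Gamma\twoheadrightarrow\Gbar_d$ and then checking that all three groups have order $p^{d+2}$. In $\Gamma$ the relation $t^{-1}st=s^{1+p^{d}}$ shows that $\langle s\rangle\trianglelefteq\Gamma$, and $\Gamma/\langle s\rangle$ is cyclic of order dividing $p$; since $|\langle s\rangle|$ divides $p^{d+1}$ this gives $|\Gamma|\le p^{d+2}$. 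For the surjection onto $A(1,d)$, recall that $z=x_1^{p}$, so $A(1,d)=\langle x_1,y_1\rangle$; moreover $x_1^{p^{d+1}}=z^{p^{d}}=1$, $y_1^{p}=1$, and $y_1^{-1}x_1y_1=x_1[x_1,y_1]=x_1z^{p^{d-1}}=x_1^{1+p^{d}}$, so $s\mapsto x_1$, $t\mapsto y_1$ defines a surjection $\Gamma\twoheadrightarrow A(1,d)$.

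For the surjection onto $\Gbar_d$, set $a=x_1w$, $b=y_1$, $c=z$ in $G_d$, where we identify $y_1,z\in H(1,1)$ and $w\in C_{p^{d+1}}$ with $(y_1,1),(z,1),(1,w)$. Since $x_1$ and $w$ commute and $x_1^{p}=1$, we have $a^{p}=x_1^{p}w^{p}=w^{p}$, hence $a^{p^{d+1}}=w^{p^{d+1}}=1$; and since $w$ is central and commutes with $y_1$ we have $[a,b]=[x_1,y_1]=z=c$, so $c\in Z(G_d)$, $c^{p}=1$, and $G_d=\langle a,b\rangle$. The element $(x_1w)^{p^{d}}z^{-1}=w^{p^{d}}z^{-1}$ is central in $G_d$, and modding it out imposes the relation $\bar a^{p^{d}}=\bar c$ in $\Gbar_d$; therefore $\bar b^{-1}\bar a\bar b=\bar a[\bar a,\bar b]=\bar a\bar c=\bar a^{1+p^{d}}$, while $\bar b^{p}=1$ and $\Gbar_d=\langle\bar a,\bar b\rangle$. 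Hence $s\mapsto\bar a$, $t\mapsto\bar b$ defines a surjection $\Gamma\twoheadrightarrow\Gbar_d$.

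It remains to compare orders. By construction $|A(1,d)|=p^{2+d}$. To compute $|\Gbar_d|$, let $\pi\colon H(1,1)\times C_{p^{d+1}}\to C_{p^{d+1}}$ be the second projection; then $\pi(G_d)=C_{p^{d+1}}$ because $\pi(a)=w$. Since $c=[a,b]$ is central of order $p$, every element of $G_d$ can be written $a^{i}b^{j}c^{k}=(x_1^{i}y_1^{j}z^{k},w^{i})$ (the first coordinate computed in $H(1,1)$), and such an element lies in $\ker(\pi|_{G_d})$ exactly when $p^{d+1}\mid i$, which forces $x_1^{i}=1$; thus $\ker(\pi|_{G_d})=\{(y_1^{j}z^{k},1):0\le j,k<p\}$ has order $p^{2}$, and since $|a|=p^{d+1}$ we get $|G_d|=p^{d+1}\cdot p^{2}=p^{d+3}$. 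Because $(x_1w)^{p^{d}}z^{-1}=(z^{-1},w^{p^{d}})$ has order $p$, we conclude $|\Gbar_d|=p^{d+3}/p=p^{d+2}$. Therefore $|\Gamma|=|A(1,d)|=|\Gbar_d|=p^{d+2}$, so both surjections are isomorphisms and $\Gbar_d\cong\Gamma\cong A(1,d)$. The only step that demands care is the order computation for $\Gbar_d$: one must verify that the normal-form elements $a^{i}b^{j}c^{k}$ with $0\le i<p^{d+1}$ and $0\le j,k<p$ are pairwise distinct, equivalently that $|G_d|=p^{d+3}$, and that the cyclic subgroup being factored out has order exactly $p$; the remaining verifications are routine.
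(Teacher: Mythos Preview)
Your argument is correct and follows essentially the same route as the paper's proof: verify that the images $\bar a,\bar b,\bar c$ of $x_1w,y_1,z$ in $\Gbar_d$ satisfy the defining relations of $A(1,d)$ (namely $|\bar a|=p^{d+1}$, $|\bar b|=p$, and $[\bar a,\bar b]=\bar a^{p^d}$), then conclude by an order count. The paper does this in three lines, asserting $|\xbar_1|=p^{d+1}$ directly; you make the argument more explicit by introducing the metacyclic presentation $\Gamma$ as a common target, and by computing $|G_d|=p^{d+3}$ via the projection to $C_{p^{d+1}}$ in order to get $|\Gbar_d|=p^{d+2}$ honestly. The extra bookkeeping you do (normal form $a^ib^jc^k$, identification of $\ker(\pi|_{G_d})=\langle b,c\rangle$, order of $w^{p^d}z^{-1}$) is exactly what is left implicit in the paper's assertion that $|\xbar_1|=p^{d+1}$, so the two proofs differ only in level of detail, not in strategy.
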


\begin{proof}
We have $[x_1w,y_1]=[x_1,y_1]=z$ and $(x_1w)^p=w^p$.
Let $\xbar_1,\ybar_1,\zbar$ denote the images in
$\Gbar_d$ of $x_1w,y_1,z$.  Then $|\xbar_1|=p^{d+1}$,
$|\ybar_1|=p$, and
$[\xbar_1,\ybar_1]=\zbar=\xbar_1^{p^d}$.  Hence
$\Gbar_d\cong A(1,d)$.
\end{proof}

\section{$p$-extensions with a nonintegral upper break}
\label{pext}

Let $p>2$, let $G$ be a nonabelian $p$-group, and let
$K$ be a local field of characteristic $p$ with perfect
residue field $k$.  In this section we prove that there
exists a totally ramified Galois extension $L/K$ with
Galois group $G$ such that $L/K$ has an upper
ramification break which is not an integer.  It follows
from Theorem~\ref{fields} that every embedding problem
over $K$ which only involves $p$-groups can be solved
with a totally ramified extension.  Therefore we only
need to give an example of a $G$-extension with a
nonintegral upper break for each $G$ which is
$\preccurlyeq$-minimal among nonabelian $p$-groups.
These groups are classified in
Proposition~\ref{classify}.

     Our proof uses a bootstrap argument, based on
constructing $H(1,1)$-extensions with a nonintegral
upper ramification break.  As a first step, we give an
easy method for building $H(1,1)$-extensions using
Artin-Schreier extensions.  Recall that if $\beta\in K$
satisfies $v_K(\beta)=-b$ with $b\ge1$ and $p\nmid b$
then the roots of $X^p-X-\beta$ generate a
$C_p$-extension of $K$ with ramification break $b$ (see
Proposition~2.5 in \cite[III]{FV}).

\begin{lemma} \label{H11}
Let $a,b$ be positive integers with $a>b$, $p\nmid a$,
and $p\nmid b$.  Let $\alpha,\beta\in K$ satisfy
$v_K(\alpha)=-a$ and $v_K(\beta)=-b$ and let
$x,y\in K^{sep}$ satisfy $x^p-x=\alpha$ and
$y^p-y=\beta$.  Set $M=K(x,y)$ and let $\gamma\in K$.
Let $z\in K^{sep}$ satisfy $z^p-z=\alpha y+\gamma$ and
set $L=M(z)$.  Then $L/K$ is a totally ramified
$H(1,1)$-extension.
\end{lemma}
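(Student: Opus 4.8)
The plan is to build $L$ over $K$ in the two stages $K\subset M\subset L$, analyzing each stage by Artin--Schreier theory, and to postpone the question of total ramification of $L/K$ to a final group-theoretic step. First I would check that $M=K(x,y)$ is a totally ramified $(C_p\times C_p)$-extension of $K$. Since $v_K(\alpha)=-a$ and $v_K(\beta)=-b$ with $p\nmid a$ and $p\nmid b$, the cited fact shows $K(x)/K$ and $K(y)/K$ are $C_p$-extensions with ramification breaks $a$ and $b$; as $a\ne b$ these fields are distinct, hence linearly disjoint over $K$ (each being $C_p$), so $\Gal(M/K)\cong C_p\times C_p$ and $[M:K]=p^2$. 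Every $C_p$-subextension of $M/K$ is generated by some $w$ with $w^p-w=i\alpha+j\beta$ for $(i,j)\in\F_p^2\setminus\{(0,0)\}$, and $v_K(i\alpha+j\beta)\in\{-a,-b\}$ since $a>b$; thus each $C_p$-subextension is ramified, and therefore so is $M/K$.

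Next I would show $L=M(z)$ is Galois over $K$ of degree $p^3$. Write $\theta=\alpha y+\gamma$, so $z^p-z=\theta$, and let $\sigma,\rho$ generate $\Gal(M/K)$ with $\sigma(x)=x+1,\ \sigma(y)=y$ and $\rho(x)=x,\ \rho(y)=y+1$. It suffices to verify $\mu(\theta)-\theta\in\wp(M)$ for each $\mu\in\Gal(M/K)$, where $\wp(w)=w^p-w$: indeed $\sigma(\theta)-\theta=0$ and $\rho(\theta)-\theta=\alpha=\wp(x)$, so in general $(\sigma^i\rho^j)(\theta)-\theta=j\alpha=\wp(jx)\in\wp(M)$. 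Hence $L/K$ is Galois with $[L:K]=p^2[L:M]$ and $[L:M]\in\{1,p\}$. To rule out $z\in M$: if $z\in M$, applying $\sigma$ and $\rho$ to $\wp(z)=\theta$ forces $\sigma(z)=z+c$ and $\rho(z)=z+x+c'$ with $c,c'\in\F_p$, whence $\sigma\rho(z)=\rho\sigma(z)+1$, contradicting the commutativity of $\Gal(M/K)$. So $[L:M]=p$ and $[L:K]=p^3$.

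To identify $\Gal(L/K)$, let $\tau$ generate $\Gal(L/M)$ with $\tau(z)=z+1$, lift $\sigma,\rho$ to $\tilde\sigma,\tilde\rho\in\Gal(L/K)$, and adjust by powers of $\tau$ so that $\tilde\sigma(z)=z$ and $\tilde\rho(z)=z+x$. A short computation then gives $\tilde\sigma^p=\tilde\rho^p=\tau^p=\id$, shows $\tau$ is central, and yields $[\tilde\sigma,\tilde\rho]=\tau$. Since $\tilde\sigma,\tilde\rho$ generate $\Gal(L/K)$, the assignment $x_1\mapsto\tilde\sigma,\ y_1\mapsto\tilde\rho,\ z\mapsto\tau$ defines a surjection from $H(1,1)$ onto $\Gal(L/K)$, hence an isomorphism, as both groups have order $p^3$. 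Finally, for total ramification let $I\trianglelefteq\Gal(L/K)$ be the inertia subgroup. Because $M=L^{\langle\tau\rangle}$ and $M/K$ is totally ramified, the image of $I$ in $\Gal(M/K)=\Gal(L/K)/\langle\tau\rangle$ is all of $\Gal(M/K)$, i.e. $I\langle\tau\rangle=\Gal(L/K)$. If $I\ne\Gal(L/K)$ then $I$ has index $p$, so it is normal and contains the commutator subgroup $[\Gal(L/K),\Gal(L/K)]=\langle\tau\rangle$; but then $I=I\langle\tau\rangle=\Gal(L/K)$, a contradiction. Hence $I=\Gal(L/K)$ and $L/K$ is totally ramified.

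The main obstacle is this last step. Trying to prove total ramification directly by tracking the pole order of $\theta=\alpha y+\gamma$ in $M$ is awkward, because $v_M(\alpha y)$ is divisible by $p$ and $\gamma$ is completely unconstrained; one instead has to combine the already-established structure $\Gal(L/K)\cong H(1,1)$ with the total ramification of $M/K$. Everything else is routine Artin--Schreier bookkeeping, the one point requiring a little care being the normalization of the lifts $\tilde\sigma,\tilde\rho$ used to compute the commutator.
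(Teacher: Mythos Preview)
Your proof is correct and follows essentially the same Artin--Schreier route as the paper: compute how the generators of $\Gal(M/K)$ act on $\alpha y+\gamma$, lift them to $L$, and read off the Heisenberg relations from the commutator action on $z$. You are in fact more thorough than the paper on two points it leaves implicit. First, the paper does not separately argue that $z\notin M$; its commutator computation $[\tilde\sigma,\tilde\tau](z)=z+1$ does this silently, since $\Gal(M/K)$ is abelian, whereas you spell it out. Second, the paper simply asserts total ramification of $L/K$ without justification; your inertia-group argument---using that $I$ surjects onto $\Gal(M/K)$ and that any proper normal subgroup of $H(1,1)$ of index $p$ would contain $[G,G]=\langle\tau\rangle$---is a clean way to fill that gap, and your remark that a direct valuation computation on $\alpha y+\gamma$ is obstructed (because $v_M(\alpha y)\equiv0\pmod p$ and $\gamma$ is arbitrary) correctly identifies why the group-theoretic detour is the right move.
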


\begin{proof}
By construction $M/K$ is a totally ramified
$C_p^2$-extension.  Let $\sigma,\tau\in\Gal(M/K)$
satisfy $\sigma(x)=x+1$, $\sigma(y)=y$, $\tau(x)=x$, and
$\tau(y)=y+1$.  Then
\begin{align*}
(\sigma-1)(\alpha y+\gamma)&=0 \\
(\tau-1)(\alpha y+\gamma)&=\alpha=\wp(x).
\end{align*}
Since $x\in M$ it follows that $L/K$ is Galois.
Furthermore, we may extend $\sigma,\tau$ to
$\sigmat,\taut\in\Gal(L/K)$ by setting $\sigmat(z)=z$
and $\taut(z)=z+x$.  We easily find that
$|\sigmat|=|\taut|=p$, $[\sigmat,\taut]\in\Gal(L/M)$,
and $[\sigmat,\taut](z)=z+1$.  The last formula implies
that $[\sigmat,\taut]$ generates $\Gal(L/M)$.  Therefore
$\Gal(L/K)\cong H(1,1)$.
\end{proof}

\begin{prop} \label{p3}
Let $K$ be local field of characteristic $p>2$ and let
$F/K$ be a ramified $C_p$-extension.  Let $b$ be the
ramification break of $F/K$, and let $a$ be an integer
such that $a>b$ and $a\not\equiv0,-b\pmod{p}$.  Then
there is a totally ramified extension $L/F$ such that
$L/K$ is an $H(1,1)$-extension with
$\U_{L/K}=\{b,a,a+p^{-1}b\}$.  In particular, $L/K$ has
an upper ramification break which is not an integer.
\end{prop}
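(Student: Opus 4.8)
The plan is to realize $L$ by an explicit Artin--Schreier construction, to read off two of the three upper breaks of $L/K$ from a quadratic subfield, and to obtain the third from a direct computation of the ramification break of a $C_p$-subextension.

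Write $F=K(y)$ with $y^p-y=\beta$; reducing $\beta$ modulo $\wp(K)$ we may assume $v_K(\beta)=-b$. Choose $\alpha\in K$ with $v_K(\alpha)=-a$ and $\gamma\in K$ (the precise choices to be fixed in the computation below), let $x\in K^{sep}$ satisfy $x^p-x=\alpha$, put $M=K(x,y)$, let $z\in K^{sep}$ satisfy $z^p-z=\alpha y+\gamma$, and set $L=M(z)$. By Lemma~\ref{H11}, $L/K$ is a totally ramified $H(1,1)$-extension; as $F\subset M\subset L$ with $L/K$ totally ramified, $L/F$ is totally ramified. Now $K(x)/K$ has break $a$ (since $v_K(\alpha)=-a$, $p\nmid a$) and $K(y)/K=F/K$ has break $b$. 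In a $C_p^2$-extension exactly one of the $p+1$ intermediate $C_p$-subextensions has break equal to the smaller upper break and the other $p$ have break equal to the larger; since $b<a$, this gives $\U_{M/K}=\{b,a\}$, with lower breaks $b$ and $pa-(p-1)b$.

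By Theorem~\ref{herb}(b), $b,a\in\U_{L/K}$; since $[L:K]=p^3$ the multiset $\U_{L/K}$ has exactly three elements, so $\U_{L/K}=\{b,a,u\}$ for a single $u$. Let $\zeta$ generate $\Gal(L/M)=Z(\Gal(L/K))=[\Gal(L/K),\Gal(L/K)]$, a group of order $p$. For non-central $g\in\Gal(L/K)$ choose $h$ with $[h,g]\neq1$; then $[h,g]=\zeta^j$ with $p\nmid j$, so $i(\zeta)=i(\zeta^j)=i([h,g])>i(g)$ by~(\ref{bigger}) (note $i(\zeta^j)=i(\zeta)$ as $\langle\zeta^j\rangle=\langle\zeta\rangle$). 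Thus $i(\zeta)$ is the largest lower ramification break of $L/K$ and equals the break of the totally ramified $C_p$-extension $L/M$. By the tower formula $\phi_{L/K}=\phi_{M/K}\circ\phi_{L/M}$, with $\phi_{L/M}$ fixing that break, the largest upper break of $L/K$ is $\phi_{M/K}(i(\zeta))$. Since $\phi_{M/K}(pa+b)=b+(a-b)+p^{-1}b=a+p^{-1}b>a$, it suffices to show that the break of $L/M$ is $pa+b$: then the largest upper break of $L/K$ is $a+p^{-1}b$, so $u=a+p^{-1}b$ and $\U_{L/K}=\{b,a,a+p^{-1}b\}$, which is nonintegral because $p\nmid b$.

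It remains to compute the break of $L/M$, i.e.\ the reduced valuation of $\alpha y+\gamma$ modulo $\wp(M)$. The key device is the substitution $Z=z-xy$: using $(xy)^p=x^py^p$ in characteristic $p$ together with $x^p=\alpha+x$ and $y^p=\beta+y$ gives
\[
\wp(Z)=(z^p-z)-\wp(xy)=(\alpha y+\gamma)-x^py^p+xy=-x^p\beta+\gamma=-\alpha\beta-x\beta+\gamma .
\]
The dominant term is now $-\alpha\beta\in K$, and $v_K(\alpha\beta)=-(a+b)$ is prime to $p$ precisely because $a\not\equiv-b\pmod p$. The remaining task is a descent modulo $\wp(M)$: one subtracts $\wp$ of a sequence of carefully chosen elements of $M$, using the relations $x^p=\alpha+x$ and $y^p=\beta+y$ to control valuations, and arranges (through the choices of $\alpha$ and $\gamma$) that the class of $-x^p\beta+\gamma$ in $M/\wp(M)$ is represented by an element of valuation exactly $-(pa+b)$, which is prime to $p$. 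I expect this descent to be the main obstacle: one cannot read the break off a single valuation as for a $C_p^2$-extension, and the term $\alpha\beta$ taken by itself already reduces modulo $\wp(M)$ to valuation $-\psi_{M/K}(a+b)=-(pa+b)-p(p-1)b$, strictly below $-(pa+b)$, so $\alpha$ and $\gamma$ must be chosen so that all contributions of valuation below $-(pa+b)$ cancel. The congruence hypotheses $p\nmid a$, $p\nmid b$, and $p\nmid(a+b)$ are exactly what make the valuations appearing in the descent compatible.
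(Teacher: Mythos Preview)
Your reduction is correct up to the point where you must actually compute the break of $L/M$, and the substitution $Z=z-xy$ is a nice start; but as you yourself say, the descent of $-\alpha\beta-x\beta+\gamma$ modulo $\wp(M)$ is still to be done, and you have not specified $\alpha$ and $\gamma$.  That is the entire content of the proposition: the framework (Lemma~\ref{H11}, $\U_{M/K}=\{b,a\}$, the center carrying the largest break, the identity $\phi_{M/K}(pa+b)=a+p^{-1}b$) is routine, and what is at stake is producing \emph{some} choice of $\alpha,\gamma$ for which the third break is $pa+b$.  A generic $\alpha$ with $v_K(\alpha)=-a$ will not do, as your own remark about $\psi_{M/K}(a+b)$ shows; so the proof is genuinely incomplete.

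The paper closes the gap by a different and shorter route.  Rather than reduce in $M$, it writes $a=bt+ps$ with $1\le t\le p-2$, takes the very particular $\alpha=\pi_K^{-ps}\beta^t$ and $\gamma=r\alpha\beta$ with $r(t+1)\equiv1\pmod p$, and computes the break of $E/F$, where $E=F(z)$, by subtracting $\wp(r\pi_K^{-s}y^{t+1})$ from $\alpha y+\gamma$ inside $F$.  Because $\alpha$ is a monomial in $\beta$, the binomial expansion of $(y+\beta)^{t+1}$ cancels both $r\alpha\beta$ and $\alpha y$ exactly, leaving a leading term of $F$-valuation $-(pa-(p-2)b)$, so the break of $E/F$ is $pa-(p-2)b$.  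Combining this with the break $pa-(p-1)b$ of $M/F$ (from Lemma~\ref{fact1}) gives the lower breaks of $L/F$, hence of $L/K$, and the upper breaks $b,a,a+p^{-1}b$ follow.  The moral is that the right arena for the Artin--Schreier reduction is $F$, not $M$, and that $\alpha$ must be tied to $\beta$ rather than chosen freely.
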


\begin{proof}
It follows from Proposition~2.4 in \cite[III]{FV} that
there is $y\in F$ such that $F=K(y)$, $v_F(y)=-b$, and
$\beta:=y^p-y\in K$.  Since $p\nmid b$ we can write
$a=bt+ps$ with $0\le t<p$; by our assumptions on $a$ we
get $1\le t\le p-2$.  Set $\alpha=\pi_K^{-ps}\beta^t$;
then $v_K(\alpha)=-a$.  Let $x\in K^{sep}$ satisfy
$x^p-x=\alpha$.  Then $M:=F(x)=K(x,y)$ is a
$C_p^2$-extension of $K$ with upper ramification breaks
$b,a$.  Let $r$ be the inverse of $t+1$ in
$\F_p^{\times}$, let $z\in K^{sep}$ satisfy
$z^p-z=\alpha y+r\alpha\beta$, and set $L=M(z)$.  Then
$L/K$ is an $H(1,1)$-extension by Lemma~\ref{H11}.
Furthermore, $\Gal(L/M)\cong C_p$ is the commutator
subgroup of $\Gal(L/K)$.  Hence by (\ref{bigger}),
$\Gal(L/M)$ is the smallest nontrivial ramification
subgroup of $\Gal(L/K)$.  Therefore by the corollary to
Proposition~3 in \cite[IV]{cl}, the lower ramification
breaks of $M/K$ are also lower ramification breaks of
$L/K$.

     Let $E=F(z)$.  We can't directly compute the
ramification break of the $C_p$-extension $E/F$, since
$v_F(\alpha y+r\alpha\beta)=v_F(r\alpha\beta)$ is
divisible by $p$.  So instead we consider the
Artin-Schreier equation
\[X^p-X=\alpha y+r\alpha\beta-\wp(r\pi_K^{-s}y^{t+1}).\]  
Since $r\pi_K^{-s}y^{t+1}\in F$, the roots of this
equation generate $E$ over $F$.  Furthermore, we have
\begin{align*}
\alpha y+r\alpha\beta-\wp(r\pi_K^{-s}y^{t+1})
&=\alpha y+r\alpha\beta
+r\pi_K^{-s}y^{t+1}-(r\pi_K^{-s}y^{t+1})^p \\
&=\alpha y+r\alpha\beta
+r\pi_K^{-s}y^{t+1}-r\pi_K^{-ps}(y+\beta)^{t+1} \\
&=\alpha y+r\alpha\beta+r\pi_K^{-s}y^{t+1}
-r\pi_K^{-ps}\sum_{i=0}^{t+1}\binom{t+1}{i}\beta^{t+1-i}y^i.
\end{align*}
Since $\alpha=\pi_K^{-ps}\beta^t$, the $i=0$ term in the
sum is $-r\alpha\beta$ and the $i=1$ term is
$-\alpha y$.  It follows that
\[\alpha y+r\alpha\beta-\wp(r\pi_K^{-s}y^{t+1})
=r\pi_K^{-s}y^{t+1}-r\pi_K^{-ps}\sum_{i=2}^{t+1}
\binom{t+1}{i}\beta^{t+1-i}y^i.\]
Since $1\le t\le p-2$ we get
\begin{align*}
v_F\left(r\pi_K^{-ps}\binom{t+1}{2}\beta^{t-1}y^2\right)
&=-p^2s-(t-1)pb-2b \\
&=-pa+pb-2b.
\end{align*}
Since $a>b$ we have
\[v_F(r\pi_K^{-s}y^{t+1})=-ps-(t+1)b=-a-b>-pa+pb-2b.\]
Therefore
\begin{align*}
v_F(\alpha y+r\alpha\beta-\wp(r\pi_K^{-s}y^{t+1}))
&=v_F\left(r\pi_K^{-ps}\binom{t+1}{2}\beta^{t-1}y^2\right) \\
&=-pa+pb-2b,
\end{align*}
which is not divisible by $p$.  Hence the ramification
break of $E/F$ is $2b+p(a-b)$.

     Since $b$ is the ramification break of $F/K$, it
follows from Lemma~\ref{fact1} that the ramification
break of $M/F$ is $b+p(a-b)$.  Hence the upper breaks of
the $C_p^2$-extension $L/F$ are $b+p(a-b),2b+p(a-b)$,
and the lower breaks are $b+p(a-b),b+pa$.  Since the
upper breaks of the $C_p^2$-extension $M/K$ are $b,a$,
with $b<a$, the lower breaks of this extension are
$b,b+p(a-b)$.  As we observed above, these are also
lower breaks of $L/K$.  Hence the lower breaks of $L/K$
are $b,b+p(a-b),b+pa$.  We conclude that the upper
ramification breaks of $L/K$ are $b,a$, and
\[a+p^{-2}((b+pa)-(b+p(a-b)))=a+p^{-1}b.\qedhere\]
\end{proof}

\begin{lemma} \label{fact2}
Let $N/K$ be a totally ramified Galois extension such
that $\Gal(N/K)$ is isomorphic to either $H(n,d)$ (with
$n\ge0$ and $d\ge1$) or $A(n,d)$ (with $n,d\ge1$).  Let
$G=\Gal(N/K)$, let $H$ be the unique subgroup of $Z(G)$
of order $p$, and let $M\subset N$ be the fixed field of
$H$.  Then $\U_{N/K}=\U_{M/K}\cup\{v\}$ for some
$v\in\Q$ such that $v>w$ for all $w\in\U_{M/K}$.
Furthermore, $H=G^v$ is the smallest nontrivial
ramification subgroup of $G$.
\end{lemma}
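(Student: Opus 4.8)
The plan is to locate $H$ at the very bottom of the ramification filtration of $G=\Gal(N/K)$, and then transfer the resulting picture to $M=N^H$ using Herbrand's theorem and a dimension count. Note first that $|G|>1$ in every case under consideration, so $N/K$ is a nontrivial totally ramified extension of $p$-power degree; writing $b$ for its largest lower ramification break, $v:=u_n$ is its largest upper break, and $G_b=G^v$ is the smallest nontrivial ramification subgroup of $G$ (the two numberings just reparametrize the same chain of subgroups).

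The key step is to show $G_b=H$. Since $G$ is a $p$-group and $N/K$ is totally ramified, every $\sigma\in G\smallsetminus\{\id\}$ has $0<i(\sigma)<\infty$, and because $b$ is the largest value of $i(\cdot)$ we have $i(\sigma)\le b$, with equality whenever $\sigma\in G_b$. I would fix $\tau\in G_b\smallsetminus\{\id\}$, so $i(\tau)=b$; then for every $\sigma\in G$ the inequality $i([\sigma,\tau])>i(\tau)=b$ from (\ref{bigger}) forces $[\sigma,\tau]=\id$, whence $\tau\in Z(G)$ and $G_b\le Z(G)$. As $Z(G)$ is cyclic, so is $G_b$; and if $|G_b|\ge p^2$, a generator $\sigma$ of $G_b$ would have $\sigma^p\ne\id$ and $i(\sigma^p)>i(\sigma)=b$ by (\ref{bigger}), contradicting the maximality of $b$. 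Hence $|G_b|=p$, and since $H$ is the unique subgroup of $Z(G)$ of order $p$, we get $G_b=H$. This already gives the last assertion of the lemma, that $H=G^v$ is the smallest nontrivial ramification subgroup.

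Then I would compare the break multisets. As $H\le Z(G)$, $M/K$ is Galois with $\Gal(M/K)=G/H$, and Theorem~\ref{herb}(a) gives $(G/H)^x=G^xH/H$. For $x\le v$ we have $H=G^v\le G^x$, so $(G/H)^x=G^x/H$; for $x>v$, $G^x=\{\id\}$ so $(G/H)^x$ is trivial. Thus a real number $w<v$ is an upper break of $M/K$ exactly when it is one of $N/K$, with the same multiplicity (because $|G^w/H:G^{w+\epsilon}/H|=|G^w:G^{w+\epsilon}|$ for small $\epsilon>0$), while no $w\ge v$ is a break of $M/K$; that is, $\U_{M/K}$ is precisely the sub-multiset of $\U_{N/K}$ lying below $v$. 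Comparing sizes, $|\U_{M/K}|=\log_p[M:K]=\log_p[N:K]-1=|\U_{N/K}|-1$, and since $v=\max\U_{N/K}$ this forces $v$ to occur in $\U_{N/K}$ with multiplicity exactly $1$. Hence $\U_{N/K}=\U_{M/K}\cup\{v\}$ and $v>w$ for every $w\in\U_{M/K}$.

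I expect the real work to be confined to the second paragraph, where the two inequalities in (\ref{bigger}) are combined to pin $G_b$ down to the order-$p$ subgroup of the cyclic center $Z(G)$; once this is in hand, the rest is a routine application of Theorem~\ref{herb}(a) and a count of ramification breaks. The one degenerate case to keep in mind is $N=M$, which occurs exactly when $G\cong C_p\cong H(0,1)$: there $\U_{M/K}=\varnothing$ and the strict-inequality clause is vacuously true.
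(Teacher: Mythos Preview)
Your proof is correct and follows essentially the same route as the paper's: both arguments use the two inequalities in (\ref{bigger}) to identify the smallest nontrivial ramification subgroup with $H$, and then invoke Herbrand to compare $\U_{N/K}$ with $\U_{M/K}$. The only cosmetic difference is orientation---the paper fixes $\sigma\notin H$ and produces an element of $H$ with strictly larger $i$-value, whereas you fix the bottom group $G_b$ and argue it must be central of order $p$; your Herbrand step is also spelled out in more detail than the paper's one-line appeal to Theorem~\ref{herb}(b).
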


\begin{proof}
Let $\sigma\in G$ with $\sigma\not\in Z(G)$.  Then there
is $\tau\in G$ such that $[\sigma,\tau]$ generates
$H\cong C_p$.  Hence by (\ref{bigger}), for $\rho\in H$
we have $i(\rho)>i(\sigma)$.  Suppose $\sigma\in Z(G)$
but $\sigma\not\in H$.  Then there is $1\le i\le d-1$
such that $\sigma^{p^i}$ generates $H$.  Once again
by (\ref{bigger}) we get $i(\rho)>i(\sigma)$ for all
$\rho\in H$.  It follows that $H$ is the smallest
nontrivial ramification subgroup of $G$.  Therefore
$H=G^v$, with $v$ the largest upper ramification break
of $N/K$.  Using Theorem~\ref{herb}(b) we deduce that
$\U_{N/K}=\U_{M/K}\cup\{v\}$.
\end{proof}

     We now construct $H(n,d)$-extensions and
$A(n,d)$-extensions which have at least one nonintegral
upper break.

\begin{prop} \label{nonint}
Let $K$ be a local field of characteristic $p>2$ and let
$n,d\ge1$.
\begin{enumerate}[(a)]
\item There is a totally ramified Galois extension
$L/K$ such that $\Gal(L/K)\cong H(n,d)$ and the largest
upper ramification break of $L/K$ is not an integer.
\item There is a totally ramified Galois extension
$L/K$ such that $\Gal(L/K)\cong A(n,d)$ and the largest
upper ramification break of $L/K$ is not an integer.
\end{enumerate}
\end{prop}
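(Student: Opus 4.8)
The plan is to reduce both parts to the base case $H(1,1)$, which is Proposition~\ref{p3}, using the central-product descriptions of Propositions~\ref{central} and \ref{A1d}, the Hasse--Arf theorem, and the embedding-problem results of Section~\ref{emb}.

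Write $\tilde G$ for $H(n,d)$ in part (a) and for $A(n,d)$ in part (b), and let $Z_0=[\tilde G,\tilde G]$, the unique subgroup of $Z(\tilde G)$ of order $p$ (Proposition~\ref{abcd}). For $\tilde G=H(n,d)$, and for $\tilde G=A(n,d)$ with $n\ge2$, Proposition~\ref{central} provides a normal subgroup $\tilde H\cong H(1,1)$ of $\tilde G$ (the $H(1,1)$-factor of the central product) with $Z_0\le\tilde H$, with $\tilde G/\tilde H$ abelian, and with $\tilde G/\tilde H$ acting trivially on $\tilde H$; note $[\tilde H,\tilde H]=[\tilde G,\tilde G]=Z_0$. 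The group $H(1,d)$ is treated identically via $H(1,d)\cong(C_{p^d}\times H(1,1))/B$, and $A(1,d)$ via the description in Proposition~\ref{A1d}. Set $\Gbar_1:=\tilde G/\tilde H$, a finite abelian $p$-group.

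I would then proceed in three steps. First, use Artin--Schreier--Witt theory to choose a totally ramified abelian extension $M/K$ with $\Gal(M/K)\cong\Gbar_1$; by the Hasse--Arf theorem its upper breaks are integers, and since its lower breaks are integers too, the Herbrand function $\phi_{M/K}$ is concave and piecewise linear with integral breakpoints and integral values there, of final slope $1/|\Gbar_1|$. Let $\lambda$ be the largest lower break of $M/K$. Second, solve the embedding problem attached to $(M/K,\tilde G,\tilde G\twoheadrightarrow\Gbar_1)$ by a totally ramified extension $N/M$, arranging in addition that $N/M$ is an $H(1,1)$-extension of the type produced by Proposition~\ref{p3}, so that $\U_{N/M}=\{b',a',a'+p^{-1}b'\}$ with $a'>b'$ positive integers, $p\nmid a'b'$, $a'\not\equiv-b'\pmod p$, and $a'$ chosen large enough that $v':=a'+p^{-1}b'>\lambda$. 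Then $N/K$ is totally ramified, $\Gal(N/K)\cong\tilde G$, and $\Gal(N/M)\cong\tilde H\cong H(1,1)$. Third, transfer the nonintegral break down to $K$: by Lemma~\ref{fact2} applied to $N/K$ the largest upper break $v$ of $N/K$ is the one attached to $Z_0$, so $\Gal(N/K)^{v}=Z_0\le\tilde H=\Gal(N/M)$; since $Z_0=[\tilde H,\tilde H]$, compatibility of the lower numbering with subgroups shows that $v'$ is likewise the upper break of $N/M$ attached to $Z_0$, and the lower break $i(\gamma)$ attached to $Z_0$ (for $\gamma$ a generator of $Z_0$) is the same whether computed over $K$ or over $M$. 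Hence $\phi_{N/K}=\phi_{M/K}\circ\phi_{N/M}$ yields $v=\phi_{M/K}(v')$, and since $v'>\lambda$ we get $v=\phi_{M/K}(\lambda)+|\Gbar_1|^{-1}(v'-\lambda)$ with $\phi_{M/K}(\lambda)\in\Z$; as $v'-\lambda=(a'-\lambda)+p^{-1}b'$ has $p$-adic valuation $-1$ (because $p\nmid b'$), the term $|\Gbar_1|^{-1}(v'-\lambda)$ has negative $p$-adic valuation, so $v\notin\Z$, which is what we want.

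The hard part will be the second step: making the embedding problem solvable with $N/M$ of the particular shape produced by Proposition~\ref{p3} while keeping $N/K$ Galois with group exactly $\tilde G$. Since $\Gbar_1$ acts trivially on $\tilde H\cong H(1,1)$, the isomorphism class of $\Gal(N/K)$, as an extension of $\Gbar_1$ by $H(1,1)$, is controlled by a cohomology class; the idea is that Proposition~\ref{p3} leaves enough freedom (in the choice of the auxiliary $C_p$-extension of $M$ and of the constant $\gamma$, all of which can be taken defined over $K$ so that $N/K$ is Galois) to realize the class corresponding to $\tilde G$ --- which is a legitimate target because the embedding problem is solvable by a totally ramified extension by Corollary~\ref{embedram}. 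Concretely one can start from any totally ramified solution furnished by Corollary~\ref{embedram} and adjust it by a cohomologically trivial, $\Gal(M/K)$-equivariant $H(1,1)$-extension of $M$ of Proposition~\ref{p3} type. I expect matching up this cohomology while retaining control of the ramification --- and the separate but parallel treatment of $A(1,d)$ via Proposition~\ref{A1d} --- to be the most delicate points of the proof.
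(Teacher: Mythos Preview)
Your outline has a genuine gap at step~2, and the paper's proof avoids it by taking a quite different route.

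The difficulty you identify yourself---``matching up this cohomology while retaining control of the ramification''---is the entire content of the proposition, and your sketch does not resolve it.  Corollary~\ref{embedram} produces \emph{some} totally ramified solution to the embedding problem, but gives no control whatsoever over the ramification of $N/M$; in particular there is no reason the resulting $H(1,1)$-extension $N/M$ should have a nonintegral upper break (and $H(1,1)$-extensions with all integral upper breaks do exist).  Your suggestion to ``adjust by a cohomologically trivial, $\Gal(M/K)$-equivariant $H(1,1)$-extension of $M$ of Proposition~\ref{p3} type'' is not a well-defined operation: $H(1,1)$ is nonabelian, so there is no obvious way to twist one solution into another while preserving both the isomorphism type of $\Gal(N/K)$ and the specific Artin--Schreier shape of Lemma~\ref{H11}.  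Moreover, for $A(1,d)$ your framework does not apply at all: $A(1,d)$ has no subgroup isomorphic to $H(1,1)$ (for $p>2$ its elements of order $p$ form an abelian group), so there is no $\tilde H\cong H(1,1)$ to quotient by.

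The paper sidesteps all of this by working over $K$ throughout rather than over the intermediate field $M$.  It first builds an $H(n-1,d)$- (or $A(n-1,d)$-, or $C_{p^{d+1}}$-) extension $N_1/K$ with \emph{no} ramification constraint, using only Corollary~\ref{embedram}.  It then uses Proposition~\ref{p3} to build an $H(1,1)$-extension $N_2/K$ whose upper breaks $\{b,a,a+p^{-1}b\}$ all exceed the largest break of $N_1/K$ (and, in the $A(1,d)$ case, sharing the prescribed $C_p$-subextension $F$).  Disjointness of the break sets forces $N_1\cap N_2=K$ (or $=F$), so $\Gal(N_1N_2/K)$ is the full direct (or fibered) product, which is \emph{larger} than $\tilde G$.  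One then passes to the correct quotient $L=N^B$ using the central-product descriptions, and Lemma~\ref{fact1} pins down which of the two ``new'' breaks $\{v,\,a+p^{-1}b\}$ survives in $\U_{L/K}$: all but one of the $p+1$ intermediate fields inherit the larger break $a+p^{-1}b$, and the exceptional one is identified as $N_1M_2$, which visibly has the wrong Galois group.  Thus the embedding problem and the ramification control are decoupled: the former is solved with no constraints, and the latter is handled by Lemma~\ref{fact1} after taking a compositum.  Your step~3 computation with $\phi_{M/K}$ is correct in spirit, but the paper never needs it because the nonintegral break $a+p^{-1}b$ is already an upper break over $K$ from the start.
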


\begin{proof}
(a) By Corollary~\ref{embedram} there exists a totally
ramified $H(n-1,d)$-extension $N_1/K$.  Let $v$ be the
largest upper ramification break of $N_1/K$ and let
$a,b$ be integers such that $a>b>v$, $p\nmid b$, and
$a\not\equiv0,-b\pmod{p}$.  Then by Proposition~\ref{p3}
there is an $H(1,1)$-extension $N_2/K$ such that
$\U_{N_2/K}=\{b,a,a+p^{-1}b\}$.  Set $N=N_1N_2$.  Since
$\U_{N_1/K}$ and $\U_{N_2/K}$ are disjoint we have
$\U_{N/K}=\U_{N_1/K}\cup\U_{N_2/K}$, and
$N_1\cap N_2=K$.  It follows that
\begin{align*}
\Gal(N/K)&\cong\Gal(N_1/K)\times\Gal(N_2/K) \\
&\cong H(n-1,d)\times H(1,1).
\end{align*}
For $i=1,2$ let $M_i$ be the subfield of $N_i$ fixed by
the unique subgroup of $Z(\Gal(N_i/K))$ with order $p$.
Set $M=M_1M_2$; then $\Gal(N/M)\cong C_p^2$.  It follows
from Lemma~\ref{fact2} that
$\U_{N_1/K}=\U_{M_1/K}\cup\{v\}$ and
$\U_{N_2/K}=\U_{M_2/K}\cup\{a+p^{-1}b\}$.  Since
$\U_{M/K}=\U_{M_1/K}\cup\U_{M_2/K}$ this implies
$\U_{N/K}=\U_{M/K}\cup\{v,a+p^{-1}b\}$.  By
Proposition~\ref{central}(a) there is a subgroup
$B\le\Gal(N/M)$ such that $\Gal(N/M)/B\cong H(n,d)$.
Let $L=N^B$ be the fixed field of $B$; then $L/K$ is a
totally ramified $H(n,d)$-extension.  Since
\[\Gal(N_1M_2/K)\cong H(n-1,d)\times C_p^2
\not\cong H(n,d)\]
we have $L\not=N_1M_2$.  Since $a+p^{-1}b>v$ and $v$ is
the largest upper ramification break of $N_1M_2/K$ it
follows from Lemma~\ref{fact1} that
$a+p^{-1}b\not\in\Z$ is an
upper ramification break of $L/K$.
\\[\smallskipamount]
(b) If $n\ge2$ then we proceed as in case (a): Let
$N_1/K$ be a totally ramified $A(n-1,d)$-extension whose
largest upper ramification break is $v$.  By
Proposition~\ref{p3} there is an $H(1,1)$-extension
$N_2/K$ such that $\U_{N_2/K}=\{b,a,a+p^{-1}b\}$, with
$a>b>v$, $p\nmid b$, and $a\not\equiv0,-b\pmod{p}$.
Setting $N=N_1N_2$, we get
$\U_{N/K}=\U_{N_1/K}\cup\U_{N_2/K}$, $N_1\cap N_2=K$,
and
\begin{align*}
\Gal(N/K)&\cong\Gal(N_1/K)\times\Gal(N_2/K) \\
&\cong A(n-1,d)\times H(1,1).
\end{align*}
Defining $M_i$ and $M$ as in the proof of (a) we get
$\Gal(N/M)\cong C_p^2$ and
$\U_{N/K}=\U_{M/K}\cup\{v,a+p^{-1}b\}$.  Hence by
Proposition~\ref{central}(b) there is $B\le\Gal(N/M)$
such that $\Gal(N/K)/B\cong A(n,d)$.  Setting $L=N^B$ we
get $\Gal(L/K)\cong A(n,d)$.  Since $N_1M_2$ is a
$C_p$-extension of $M$ such that $v$ is an upper
ramification break of $N_1M_2/K$, using Lemma~\ref{fact1}
we deduce that $a+p^{-1}b$ is an upper ramification break
of $L/K$.

     It remains to construct an $A(1,d)$-extension with
a nonintegral upper ramification break for each $d\ge1$.
Let $N_1/K$ be a totally ramified
$C_{p^{d+1}}$-extension and let $v$ denote the largest
upper ramification break of $N_1/K$.  Let $F/K$ be the
$C_p$-subextension of $N_1/K$.  Let $b$ be the
ramification break of $F/K$ and let $a$ be an integer
such that $a>v$ and $a\not\equiv0,-b\pmod{p}$.  Then by
Proposition~\ref{p3} there is a totally ramified
$H(1,1)$-extension $N_2/K$ such that $N_1\cap N_2=F$ and
$\U_{N_2/K}=\{b,a,a+p^{-1}b\}$.  Set $N=N_1N_2$.  Then
\[\Gal(N/K)\cong\{(\sigma_1,\sigma_2)\in
\Gal(N_1/K)\times\Gal(N_2/K):
\sigma_1|_F=\sigma_2|_F\}\]
is isomorphic to the group $G_d$ defined in
Proposition~\ref{A1d}.  Let $M_1/K$ be the
$C_{p^d}$-subextension of $N_1/K$ and let $M_2/K$ be the
$C_p^2$-subextension of $N_2/K$.  Then $M_1\cap M_2=F$.
Set $M=M_1M_2$; then $N/M$ is a $C_p^2$-extension.  By
Proposition~\ref{A1d} there is a $C_p$-subextension
$L/M$ of $N/M$ such that $L/K$ is a totally ramified
$A(1,d)$-extension.  On the other hand, $N_1M_2/M$ is a
$C_p$-subextension of $N/M$ such that $N_1M_2/K$ has $v$
as an upper ramification break.  Since
\[\Gal(N_1M_2/K)\cong C_{p^{d+1}}\times C_p
\not\cong A(1,d)\]
we have $L\not=N_1M_2$.  Hence by Lemma~\ref{fact1} we
see that $a+p^{-1}b$ is an upper ramification break of
$L/K$.  This completes the proof.
\end{proof}

     We now prove the converse of the Hasse-Arf theorem
for totally ramified $p$-extensions.

\begin{theorem} \label{pchat}
Let $K$ be a local field of characteristic $p>2$ and let
$G$ be a finite nonabelian $p$-group.  Then there is a
totally ramified $G$-extension $L/K$ which has an upper
ramification break which is not an integer.
\end{theorem}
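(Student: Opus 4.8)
The plan is to reduce the general case to the case where $G$ is a minimal nonabelian $p$-group, and then invoke the explicit constructions of Proposition~\ref{nonint}. The reduction uses the embedding-problem machinery of Section~\ref{emb}. Since $G$ is a nonabelian $p$-group, among the quotients of $G$ there is at least one which is $\preccurlyeq$-minimal among nonabelian $p$-groups; choose such a quotient $\Gbar$ and let $\phi\colon G\twoheadrightarrow\Gbar$ be the quotient map. By Proposition~\ref{classify}, $\Gbar$ is isomorphic to $H(n,d)$ or $A(n,d)$ for some $n,d\ge1$, so by Proposition~\ref{nonint} there is a totally ramified Galois extension $L_0/K$ with $\Gal(L_0/K)\cong\Gbar$ whose largest upper ramification break $v$ is not an integer.

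Next I would solve the embedding problem associated to $(L_0/K,G,\phi)$. By Corollary~\ref{embedram} there is a totally ramified extension $L/L_0$ such that $L/K$ is Galois with $\Gal(L/K)\cong G$ and the quotient map $\Gal(L/K)\to\Gal(L_0/K)$ corresponds to $\phi$ under this isomorphism. It remains to check that $L/K$ has a nonintegral upper break. For this I would apply Theorem~\ref{herb}(b): since $L_0/K$ is a Galois subextension of $L/K$, we have $\U_{L_0/K}\subset\U_{L/K}$. In particular the nonintegral number $v\in\U_{L_0/K}$ lies in $\U_{L/K}$, so $L/K$ has a nonintegral upper ramification break. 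This completes the proof.

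The only real point requiring care is the existence of a $\preccurlyeq$-minimal nonabelian quotient of $G$: one should note that the set of nonabelian quotients of $G$ is nonempty (it contains $G$ itself) and finite (up to isomorphism), so it has a $\preccurlyeq$-minimal element; and any such element is a minimal nonabelian $p$-group in the sense of Section~\ref{min}, since a proper nonabelian quotient of it would be a smaller nonabelian quotient of $G$. Everything else is a direct citation of the results already established: Proposition~\ref{classify} identifies the minimal groups, Proposition~\ref{nonint} produces the examples for those groups, Corollary~\ref{embedram} lifts them to $G$-extensions, and Theorem~\ref{herb}(b) propagates the nonintegral break upward. I do not anticipate any genuine obstacle here; the substance of the argument has already been done in the preceding sections, and this theorem is the assembly step.
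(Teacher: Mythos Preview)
Your proposal is correct and follows essentially the same approach as the paper's proof: pass to a minimal nonabelian quotient $\Gbar$, classify it via Proposition~\ref{classify}, build a $\Gbar$-extension with a nonintegral break via Proposition~\ref{nonint}, lift to a $G$-extension via Corollary~\ref{embedram}, and propagate the break via Theorem~\ref{herb}(b). The only difference is that you spell out the existence of a $\preccurlyeq$-minimal nonabelian quotient explicitly, whereas the paper simply asserts it by citing Proposition~\ref{classify}; your extra paragraph is a harmless clarification rather than a different argument.
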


\begin{proof}
By Proposition~\ref{classify} there is a quotient
$\Gbar=G/H$ of $G$ which is isomorphic to either
$H(n,d)$ or $A(n,d)$ for some $n,d\ge1$.  By
Proposition~\ref{nonint} there is a totally ramified
$\Gbar$-extension $M/K$ which has a nonintegral upper
ramification break.  By Corollary~\ref{embedram} there
is an extension $L/M$ such that $L/K$ is a totally
ramified $G$-extension.  Since $\U_{M/K}\subset\U_{L/K}$
it follows that $L/K$ has a nonintegral upper
ramification break.
\end{proof}

\begin{cor} \label{pchat0}
Let $p>2$ and let $G$ be a finite nonabelian $p$-group.
Then there is a local field $F$ of characteristic 0 with
residue characteristic $p$ and a totally ramified
$G$-extension $E/F$ which has an upper ramification
break which is not an integer.
\end{cor}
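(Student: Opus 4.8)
The plan is to deduce Corollary~\ref{pchat0} from Theorem~\ref{pchat} by a lifting argument, transferring a totally ramified $G$-extension from equal characteristic $p$ to mixed characteristic. First I would fix a local field $K$ of characteristic $p$ with perfect residue field $k$ --- for concreteness one may take $K=k((t))$ for any perfect field $k$ of characteristic $p$, say $k=\F_p$, so that $K=\F_p((t))$. By Theorem~\ref{pchat} there is a totally ramified $G$-extension $L/K$ having a nonintegral upper ramification break. The goal is to produce a local field $F$ of characteristic $0$ and residue characteristic $p$, together with a totally ramified $G$-extension $E/F$ whose sequence of upper ramification breaks coincides with that of $L/K$; in particular $E/F$ inherits the nonintegral break.

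The main tool is Deligne's theorem on the equivalence of the categories of finite separable extensions of two local fields with the same residue field whose associated ``truncated discrete valuation rings'' $\OO_K/\M_K^{m+1}$ agree up to a suitably large level $m$ (depending on the ramification data one wishes to control). The key point is that given the extension $L/K$ in characteristic $p$, one chooses $m$ large enough that all the ramification-theoretic information of $L/K$ --- the lower and hence upper breaks, the Galois action --- is determined by the reduction $\OO_L/\M_L^{m'+1}$ over $\OO_K/\M_K^{m+1}$ for appropriate $m'$. One then selects a local field $F$ of characteristic $0$ with residue field $k$ and with $\OO_F/\M_F^{m+1}\cong\OO_K/\M_K^{m+1}$ as truncated valuation rings; such an $F$ exists --- for instance a totally ramified extension of $W(k)[1/p]$ of sufficiently large ramification index and suitable Eisenstein polynomial, or more directly a finite extension of $\Q_p$ when $k=\F_p$. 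Deligne's equivalence then transports $L/K$ to a finite separable extension $E/F$ which is again Galois with group $G$, again totally ramified, and which has the same upper ramification breaks as $L/K$. Hence $E/F$ is a totally ramified $G$-extension with a nonintegral upper break, as required.

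The step I expect to be the main obstacle is the precise invocation of Deligne's theorem: one must be careful about exactly which finite-level data of $(\OO_K,\M_K)$ need to be matched, and about the fact that Deligne's equivalence is only between the categories of extensions of bounded ramification (or one must truncate at a level $m'$ that grows with the discriminant of $L/K$). Concretely, the ramification break data of $L/K$ is visible in $\OO_L/\M_L^{m'+1}$ once $m'$ exceeds (roughly) the largest lower break plus the different exponent, and the matching of $\OO_K$ and $\OO_F$ must be carried out at a correspondingly large level; one then checks that the extension $E/F$ produced by Deligne is totally ramified (its ramification index equals $[G:1]$ because the break data forces it) and that the Galois action transported by the equivalence realizes the group $G$. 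Once these bookkeeping points are settled, the fact that upper breaks are preserved is immediate from their definition in terms of the lower breaks, which in turn are read off from the same truncated data on both sides. I would close by remarking that this also yields the statement of Theorem~\ref{chat} referenced in the introduction, since the construction works over an arbitrary local field $K$ of characteristic $p$.

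\begin{cor} \label{pchat0proof}
\end{cor}
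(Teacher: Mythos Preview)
Your approach is correct and is essentially the same as the paper's: obtain $L/K$ in characteristic $p$ from Theorem~\ref{pchat} and then invoke Deligne's theory of close local fields to lift to characteristic $0$. The paper's proof is slightly more streamlined in that it cites Th\'eor\`eme~2.8 of \cite{Del} directly and states the precise condition on $F$---namely, that $F$ have the same residue field as $K$ and absolute ramification index $e_F>u_{L/K}$, where $u_{L/K}$ is the largest upper break of $L/K$---which replaces your discussion of matching truncated valuation rings at a sufficiently high level; but this is just a cleaner packaging of the same argument.
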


\begin{proof}
Let $L/K$ be a totally ramified $G$-extension with a
nonintegral upper ramification break and let $u_{L/K}$
be the largest upper ramification break of $L/K$.  Let
$F$ be a local field of characteristic 0 with the same
residue field as $K$.  Assume further that the absolute
ramification index $e_F=v_F(p)$ of $F$ satisfies
$e_F>u_{L/K}$.  Then it follows from
Th\'{e}or\`{e}me~2.8 of \cite{Del} that there is a
totally ramified $G$-extension $E/F$ with the same
ramification breaks as $L/K$.
\end{proof}

\section{$G$-extensions with a nonintegral upper break}
\label{Gext}

Let $G$ be a nonabelian group which is the Galois group
of some totally ramified extension of local fields with
residue characteristic $p>2$.  In this section we prove
the converse to the Hasse-Arf theorem for totally
ramified extensions by showing that there exists a
totally ramified $G$-extension of local fields of
characteristic $p$ which has a non-integral upper
ramification break.

     Let $K$ be a local field with residue
characteristic $p$ and let $L/K$ be a totally ramified
Galois extension of degree $mp^n$, with $p\nmid m$.  Set
$G=\Gal(L/K)$ and let $P$ be the wild ramification
subgroup of $G$.  Then $P\trianglelefteq G$ and
$G/P\cong C_m$, so we get $G\cong P\rtimes_{\psi}C_m$
for some homomorphism $\psi:C_m\ra\Aut(P)$.  In Remark~1
of \cite{fes} it is shown that if $P$ is abelian but $G$
is nonabelian then $L/K$ has a nonintegral upper
ramification break.  The following result gives a larger
class of groups $G$ such that every totally ramified
$G$-extension has a nonintegral upper ramification
break.

\begin{prop} \label{always}
Let $K$ be a local field with residue characteristic $p$
and let $L/K$ be a totally ramified Galois extension of
degree $mp^n$, with $p\nmid m$.  Set $G=\Gal(L/K)$ and
write $G\cong P\rtimes_{\psi}C_m$ as above.  If the
action of $C_m$ on $P$ is nontrivial then $L/K$ has a
nonintegral upper ramification break.
\end{prop}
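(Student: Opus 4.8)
The plan is to analyze the ramification filtration on $G\cong P\rtimes_\psi C_m$ directly, using the interplay between the wild part $P$ and the tame quotient $C_m$. First I would recall that the tame quotient $G/P\cong C_m$ contributes the upper break $u_0=0$ (since $m>1$), and that the positive upper breaks of $L/K$ are exactly the positive upper breaks of $L/L^P$, where $L^P/K$ is the maximal tame subextension; by Theorem~\ref{herb}(b) applied to $L/K$ and its subextensions, the upper breaks of $L/K$ living in $\mathbb{Q}_{>0}$ coincide (as a multiset) with $\U_{L/L^P}$ shifted appropriately—more precisely, since $L/L^P$ is the wildly ramified part, I would work out how the lower breaks of $L/K$ relate to those of $L/L^P$ via the factor $m$ in the Herbrand transition. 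The key point is that $C_m$ acts on each quotient $P_x/P_{x+}$ of the ramification filtration of $P$, and if $P$ were stable under a nontrivial action that permuted the breaks, the breaks themselves would be forced to be non-integral.

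The main structural input I would use is that $P=G_1$ is normal in $G$, so conjugation by a lift of a generator of $C_m$ induces an automorphism of $P$ preserving every lower ramification subgroup $P_x=G_x$ for $x\ge 1$. Suppose for contradiction that all positive upper breaks of $L/K$ are integers. Then I would argue that the action of $C_m$ on each graded piece is by a root of unity whose order divides $m$, and track how the Herbrand function $\psi_{L/K}$ interacts with this. The cleanest route: consider the subextension $L^P \subset L$ and the extension $L/L^P$, which is a totally ramified $p$-extension with Galois group $P$; its upper breaks are integers if and only if (by the Herbrand compatibility between $L/L^P$ and $L/K$) the corresponding upper breaks of $L/K$ are of the form (integer)$/m$ or similar. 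Then the nontriviality of $\psi$ forces, via the standard fact that a generator of $C_m$ acts on $\pi_L$ (a uniformizer of $L$) by multiplication by an $m$-th root of unity in the residue field, a congruence condition on the lower breaks $b_i$ of $P$ modulo $m$ that is incompatible with all upper breaks of $L/K$ being integers.

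More concretely, I would invoke the description from Remark~1 of \cite{fes} and extend its argument: there, when $P$ is abelian, one shows that a nontrivial $C_m$-action produces two distinct conjugate characters of $P$ with the same break, forcing that break to create a non-integral upper break of $L/K$. For nonabelian $P$ I would pass to an appropriate $C_m$-stable quotient of $P$ on which the action remains nontrivial—for instance, the action of $C_m$ on $P/\Phi(P)P'$ or on the first nontrivial graded piece $P_c/P_{c+}$ of the ramification filtration where the action is nontrivial—and apply the abelian case (Remark~1 of \cite{fes}) to the corresponding quotient extension. By Theorem~\ref{herb}(b), any non-integral upper break of that quotient extension is also a non-integral upper break of $L/K$, which completes the proof.

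The main obstacle I anticipate is ensuring that there is a $C_m$-stable quotient of $P$ that is abelian \emph{and} on which $C_m$ still acts nontrivially \emph{and} which corresponds to a genuine quotient extension in the ramification-theoretic sense; the Frattini-type quotients are characteristic in $P$ hence $C_m$-stable, but one must check the action doesn't become trivial on all of them—this uses that a $p$-group acted on nontrivially by a $p'$-group $C_m$ must have the action nontrivial already on $P/\Phi(P)$ (a standard fact: coprime action trivial on the Frattini quotient is trivial). With that in hand, the reduction to the abelian case of \cite{fes} goes through.
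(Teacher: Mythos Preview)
Your final approach---pass to the Frattini quotient $P/\Phi(P)$ (which is characteristic in $P$, hence normal in $G$), use the coprime-action fact that a $p'$-group acting trivially on $P/\Phi(P)$ acts trivially on $P$ to ensure the induced $C_m$-action is still nontrivial, apply Remark~1 of \cite{fes} to the resulting abelian-by-$C_m$ quotient extension, and then invoke Theorem~\ref{herb}(b)---is exactly the paper's proof. The first two exploratory paragraphs (relating upper breaks of $L/K$ to those of $L/L^P$, congruences of breaks modulo $m$, etc.) contain imprecise or incorrect statements and should simply be dropped; only the Frattini-quotient reduction is needed.
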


\begin{proof}
Let $\Phi(P)$ be the Frattini subgroup of $P$ and let
$M=L^{\Phi(P)}$ be the fixed field of $\Phi(P)$.  Then
$\Phi(P)\trianglelefteq G$, so $M/K$ is a Galois
extension.  Set $\Pbar=P/\Phi(P)$ and let
$\psibar:C_m\ra\Aut(\Pbar)$ be the homomorphism induced
by $\psi$.  Then $\psibar$ is nontrivial by a theorem of
Burnside (see Theorem~1.4 in Chapter~5 of \cite{Go80}).
Hence
\[\Gal(M/K)\cong G/\Phi(P)\cong
\Pbar\rtimes_{\psibar}C_m\]
is nonabelian.  Since $\Pbar$ is abelian it follows from
Remark~1 in \cite{fes} that $M/K$ has a nonintegral
upper ramification break.  It then follows from
Theorem~\ref{herb}(b) that $L/K$ has a nonintegral upper
ramification break.
\end{proof}

     We now prove our converse of the Hasse-Arf theorem:

\begin{theorem} \label{chat}
Let $G$ be a finite nonabelian group which is the Galois
group of some totally ramified Galois extension of local
fields with residue characteristic $p>2$.  Then there is
a totally ramified $G$-extension $L/K$ of local fields
with residue characteristic $p$ which has a nonintegral
upper ramification break.
\end{theorem}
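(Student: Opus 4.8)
The plan is to exploit the structural restriction that the hypothesis imposes on $G$. Fix a totally ramified Galois extension $E/F$ of local fields with residue characteristic $p>2$ and identify $G=\Gal(E/F)$. Let $P$ be the wild ramification subgroup of $G$; then $P$ is the (unique, hence intrinsic) Sylow $p$-subgroup of $G$, it is normal, $m:=[G:P]$ is prime to $p$, and by Schur--Zassenhaus $G\cong P\rtimes_\psi C_m$ for some conjugation action $\psi:C_m\to\Aut(P)$. Whether $\psi$ is trivial depends only on the abstract group $G$, so I would split the argument into two cases accordingly.

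If $\psi$ is nontrivial, then the action of $C_m$ on $P$ inside $\Gal(E/F)$ is nontrivial, so Proposition~\ref{always} applies directly to $E/F$ and shows that $E/F$ has a nonintegral upper ramification break; since $E/F$ is a totally ramified $G$-extension of local fields with residue characteristic $p$, we take $L/K=E/F$ and are done. If $\psi$ is trivial, then $G\cong P\times C_m$, and since $G$ is nonabelian but $C_m$ is abelian, $P$ is a nonabelian $p$-group. I would then fix a local field $K$ of characteristic $p$ whose residue field contains a primitive $m$-th root of unity (for example $K=\Fpb((t))$); this is harmless since we are free to choose $K$. By Theorem~\ref{pchat} there is a totally ramified $P$-extension $L_0/K$ with a nonintegral upper ramification break, and since $\mu_m\subset K$, Kummer theory produces a totally ramified $C_m$-extension $T/K$, say $T=K(\pi_K^{1/m})$. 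As $[L_0:K]$ is a power of $p$ while $[T:K]=m$ is prime to $p$, we have $L_0\cap T=K$, so $L:=L_0T$ is a totally ramified extension with $\Gal(L/K)\cong\Gal(L_0/K)\times\Gal(T/K)\cong P\times C_m\cong G$. Because $L_0/K$ is a Galois subextension of $L/K$, Theorem~\ref{herb}(b) gives $\U_{L_0/K}\subset\U_{L/K}$, so $L/K$ inherits a nonintegral upper ramification break.

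The proof is thus purely a matter of combining results already established: Proposition~\ref{always} handles nontrivial tame action, and Theorem~\ref{pchat} --- resting on Witt's embedding theorem and the classification of minimal nonabelian $p$-groups in Section~\ref{min} --- handles the case $G=P\times C_m$. I do not expect a real obstacle; the only points needing care are verifying that triviality of $\psi$ is independent of the choice of realizing extension $E/F$, checking that the compositum $L_0T$ has Galois group $P\times C_m$, and noting where the hypothesis is actually used: to guarantee that $G$ has the form $P\rtimes_\psi C_m$ in the first place (a general nonabelian group such as $A_5$ does not), and, in the case of nontrivial action, to supply the witnessing extension $E/F$ itself.
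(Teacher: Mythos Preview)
Your proposal is correct and follows essentially the same two-case argument as the paper: split on whether the tame action $\psi$ is trivial, apply Proposition~\ref{always} when it is not, and in the direct-product case build the extension from a $P$-extension supplied by Theorem~\ref{pchat} together with a tame $C_m$-extension. The only cosmetic difference is that the paper first constructs the $P$-extension over an arbitrary local field $F$ and then base-changes by adjoining $\zeta_m$, whereas you choose $K$ at the outset so that $\mu_m\subset K$; both devices serve the same purpose of guaranteeing a totally ramified $C_m$-extension of the base.
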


\begin{proof}
The assumption on $G$ implies that $G$ has a normal
Sylow $p$-subgroup $P$ such that
$G\cong P\rtimes_{\psi}C_m$ for some $m$ with $p\nmid m$
and some $\psi:C_m\ra\Aut(P)$.  If $\psi(C_m)$ is
nontrivial then it follows from Proposition~\ref{always}
that every totally ramified $G$-extension $L/K$ of local
fields with residue characteristic $p$ has a nonintegral
upper ramification break.  On the other hand, if
$\psi(C_m)$ is trivial then $G\cong P\times C_m$, so $P$
is nonabelian.  Hence by Theorem~\ref{pchat} or
Corollary~\ref{pchat0} there is a local field $F$ with
residue characteristic $p$ and a totally ramified
$P$-extension $E/F$ which has a nonintegral upper
ramification break.  Let $\zeta_m$ be a primitive $m$th
root of unity and set $K=F(\zeta_m)$, $L_1=E(\zeta_m)$.
Then $L_1/K$ is a totally ramified $P$-extension with
the same ramification breaks as $E/F$.  In addition,
there is a totally ramified $C_m$-extension $L_2/K$.
Set $L=L_1L_2$.  Then $L/K$ is a totally ramified
$G$-extension with a nonintegral upper ramification
break.
\end{proof}

\begin{remark}
Let $L/K$ be a totally ramified Galois extension of
local fields such that for every totally ramified
abelian extension $E/K$, the upper ramification breaks
of $LE/K$ are all integers.  In \cite{fes} Fesenko
proved that $\Gal(L/K)$ must be abelian in this case.
This gives a converse to the Hasse-Arf theorem of a
different sort than the one presented here.
\end{remark}

\begin{remark} \label{D4}
Let $K$ be a local field of characteristic 2 and let
$L/K$ be a totally ramified Galois extension whose
Galois group is the dihedral group $D_4$ of order 8.  It
is shown in \cite{p3} that the upper ramification breaks
of $L/K$ must be integers.  Hence the approach that we
use here to prove the converse to the Hasse-Arf theorem
by constructing extensions of local fields in
characteristic $p$ cannot be extended to the case $p=2$.
However, there are several totally ramified
$D_4$-extensions of $\Q_2$ which have nonintegral upper
ramification breaks.  For instance, the extension of
$\Q_2$ generated by a root of the polynomial
$X^8+4X^7+2X^4+4X^2+14$ is a $D_4$-extension whose upper
breaks are 1, 2, 5/2 (see \cite{lmfdb}).  As far as we
know it remains an open question whether the converse to
Hasse-Arf holds for totally ramified extensions of local
fields with residue characteristic 2.
\end{remark}

\begin{remark}
It would be interesting to know for which local fields
$K$ the following stronger converse to the Hasse-Arf
theorem holds: For every nonabelian group $G$ such that
$K$ admits a totally ramified $G$-extension, there is a
totally ramified $G$-extension $L/K$ which has a
nonintegral upper ramification break.  It follows from
the proof of Theorem~\ref{chat} that this converse to
Hasse-Arf holds for local fields of characteristic
$p>2$.  On the other hand, Remark~\ref{D4} shows that
this converse to Hasse-Arf does not hold for local
fields of characteristic 2.  As far as we know it is an
open question whether this stronger converse to
Hasse-Arf holds for local fields of characteristic 0.
\end{remark}

\end{document}